\newtheorem{theorem}{Theorem}
\newtheorem{corollary}[theorem]{Corollary}
\newtheorem{definition}[theorem]{Definition}
\newtheorem{lemma}[theorem]{Lemma}
\newtheorem{proposition}[theorem]{Proposition}
\newtheorem{remark}[theorem]{Remark}
\newenvironment{proof}[1][Proof]{\noindent\textbf{#1.} }{\ \rule{0.5em}{0.5em}}
\begin{document}

\title{Intersection Theorems for Closed Convex Sets and Applications}
\author{Hichem Ben-El-Mechaiekh \\
Department of Mathematics, Brock University, \smallskip \\
Saint Catharines, Ontario, Canada, L2S 3A1}
\date{ \textit{To Appear in Missouri J. Math. Sc.}\\
\textit{(Sumitted: November 9, 2011; Accepted: May 22, 2012)}}
\maketitle

\begin{abstract}
A number of landmark existence theorems of nonlinear functional analysis
follow in a simple and direct way from the basic separation of convex closed
sets in finite dimension via elementary versions of the
Knaster-Kuratowski-Mazurkiewicz principle - which we extend to arbitrary
topological vector spaces - and a coincidence property for so-called von
Neumann relations. The method avoids the use of deeper results of
topological essence such as the Brouwer fixed point theorem or the Sperner's
lemma and underlines the crucial role played by convexity.\medskip\ It turns
out that the convex KKM principle is equivalent to the Hahn-Banach theorem,
the Markov-Kakutani fixed point theorem, and the Sion-von Neumann minimax
principle.

\textit{Keywords and phrases:} Separation of convex sets, intersection
theorems, convex KKM theorem, fixed points for von Neumann relations,
coincidence, systems of nonlinear inequalities, variational inequalities,
minimization of functionals, Markov-Kakutani fixed point theorem,
Hahn-Banach theorem.

\textit{2010 AMS\ Subject Classification: }Primary: 52A07, 32F32, 32F27,
Secondary: 47H04, 47H10, 47N10
\end{abstract}

\section{Introduction}

The aim of this expository paper is to show that a number of landmark
results of nonlinear functional analysis can be quickly obtained from a
particular version of the KKM principle at little cost. This "elementary
KKM\ principle" is due to A. Granas and M. Lassonde in the framework of
super-reflexive Banach spaces [10]. It is extended to arbitrary topological
vector spaces, under a more general compactness hypothesis, with a simpler
proof based on the separation of closed convex subsets in a Euclidean space
(a result usually discussed in a first course of continuous optimization)
and an intersection theorem of V. L. Klee [17]. A similar approach is
followed to formulate a coincidence theorem for so-called \textit{von
Neumann relations}.

The methods outlined here allow for a shorter and simpler alternative
treatment of existence results of functional analysis that avoids involved
and deeper principles that require sophistication and investment in time.
The KKM\ principle is a striking example of such fundamental results.
Indeed, using the Sperner lemma as a starting point, three of the greatest
topologists of all times, Polish academician S. Mazurkiewicz and two of his
former\ doctoral students, B. Knaster and K. Kuratowski published in 1929
the celebrated KKM\ lemma: a remarkable intersection theorem for closed
covers of a Euclidean simplex [18]. They used the KKM lemma to provide a
combinatorial proof of the Brouwer fixed point theorem (the two results
being in fact equivalent). In 1961, Ky Fan extended the KKM\ Lemma to vector
spaces of arbitrary dimensions in what became known as the KKM\ principle
[12]. The KKM\ principle inspired countless mathematicians, yielding a
formidable body of work in nonlinear and convex analysis; a production known
today as the KKM\ theory. The reader is referred to Dugundji-Granas [7],
Park [20] and Yuan [22] for surveys of results, methods, and applications of
the KKM\ theory.

The particular version of the KKM\ principle discussed here, which we call
the \textit{convex KKM\ principle}, is more than sufficient to prove in a
direct and economical way, such fundamental results as the Stampacchia
Theorem on variational inequalities, the Mazur-Schauder theorem on the
minimization of lower semicontinuous quasiconvex and coercive functionals,
and the Markov-Kakutani fixed point theorem for commuting families of affine
transformations (see e.g., Br\'{e}zis [6]). It is well-known, since Kakutani
[16], that the Hahn-Banach theorem can be derived from the Markov-Kakutani
fixed point theorem. Thus, the equivalence between the Hahn-Banach theorem,
Klee's intersection theorem, the convex KKM\ principle, and the
Markov-Kakutani fixed point theorem is thus established.

\section{Preliminaries}

The fundamental tool for our proof of the convex KKM theorem is the
separation of a point and a closed convex set in a finite dimensional space.
For the sake of completeness, we\ include the basic separation properties in
finite dimensions with the simplest of proofs (see e.g., Magill and Quinzii
[19])\bigskip

\begin{lemma}
\textit{Let }$C$\textit{\ be a nonempty closed convex subset of }$%
\mathbb{R}
^{n}$\textit{\ and let }$x\notin C.$\textit{\ Denote by }$y=P_{C}(x)$\textit{%
\ the projection of }$x$\textit{\ onto }$C$\textit{. Then the hyperplane }$%
H_{C}^{x}$\textit{\ orthogonal to }$u=x-y$\textit{\ passing through }$y$ 
\textit{strictly separates }$x$\textit{\ and }$C$\textit{, namely:} 
\begin{equation*}
\langle u,z\rangle \leq \langle u,y\rangle <\langle u,x\rangle ,\forall z\in
C
\end{equation*}
\end{lemma}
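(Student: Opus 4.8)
The plan is to exploit the metric projection $y = P_C(x)$ as the minimizer of the distance function $z \mapsto \|x - z\|$ over $C$, and to extract the separating inequality from the first-order optimality condition for this minimization problem.

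First I would secure the existence of the projection, since $C$ may be unbounded. Fix any $c_0 \in C$ and put $r = \|x - c_0\|$. The minimization of $\|x - z\|$ over $C$ can then be restricted to the nonempty set $C \cap \overline{B}(x,r)$, which is closed and bounded, hence compact in $\mathbb{R}^n$. As $z \mapsto \|x - z\|$ is continuous, Weierstrass's theorem furnishes a minimizer $y$; strict convexity of the squared norm combined with convexity of $C$ yields uniqueness, justifying the notation $y = P_C(x)$. (For the separation itself, only the fact that $y$ is a minimizer is actually used.)

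The heart of the argument is the variational characterization of $y$. For an arbitrary $z \in C$, convexity of $C$ gives $y + t(z - y) \in C$ for every $t \in [0,1]$. Setting $g(t) = \|x - y - t(z - y)\|^2$, the minimality of $y$ forces $g$ to attain its minimum on $[0,1]$ at $t = 0$, so $g'(0) \geq 0$. Expanding, $g(t) = \|x - y\|^2 - 2t\langle x - y, z - y\rangle + t^2\|z - y\|^2$, hence $g'(0) = -2\langle x - y, z - y\rangle \geq 0$. Writing $u = x - y$, this reads $\langle u, z - y\rangle \leq 0$, that is, $\langle u, z\rangle \leq \langle u, y\rangle$ for all $z \in C$ — precisely the left-hand inequality.

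The right-hand inequality is then immediate: $\langle u, x\rangle - \langle u, y\rangle = \langle u, x - y\rangle = \langle u, u\rangle = \|u\|^2 = \|x - y\|^2$, which is strictly positive because $x \notin C$ forces $x \neq y$. Chaining the two relations delivers the strict separation claimed. The only delicate point is the existence of $y$ when $C$ is unbounded, resolved above by the ball-truncation that restores compactness; after that the proof is a direct first-order optimality computation, and the sole thing to watch is the sign of $g'(0)$.
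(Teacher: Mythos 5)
Your proof is correct and takes essentially the same route as the paper's: your function $g(t) = \|x - y - t(z-y)\|^2$ is exactly the paper's $\varphi_z(t) = \|x-(tz+(1-t)y)\|^2$, and both arguments derive the left inequality from the first-order condition $g'(0)\geq 0$ and the strict inequality from $\langle u, x-y\rangle = \|u\|^2 > 0$. Your only addition is the explicit ball-truncation/Weierstrass argument for the existence of the projection, a point the paper simply asserts.
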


\begin{proof}
Since $C$ is closed and convex, the projection $y=P_{C}(x)$ of $x$ onto $C$
is unique. Define, for any given $z\in C,$ a functional $\varphi
_{z}:[0,1]\longrightarrow 
\mathbb{R}
$ by:%
\begin{equation*}
\varphi _{z}(t):=\Vert x-(tz+(1-t)y\Vert ^{2}.
\end{equation*}%
As $y$ is closest to $x,$ $\varphi _{z}(t)$ achieves its minimum on [0, 1]
at $t=0$, thus $\varphi _{z}^{\prime }(0)\geq 0$. Since $\varphi
_{z}^{\prime }(t)=2t\Vert y-z\Vert ^{2}+2\langle x-y,y-z\rangle ,$ it
follows $\varphi _{z}^{\prime }(0)=2\langle x-y,y-z\rangle =2\langle
u,y-z\rangle \geq 0,$ i.e., $\langle u,z\rangle \leq \langle u,y\rangle .$
On the other hand, $0<\Vert x-y\Vert ^{2}=\langle u,x-y\rangle =\langle
u,x\rangle -\langle u,y\rangle .$ Thus, $\langle u,z\rangle \leq \langle
u,y\rangle <\langle u,x\rangle .$
\end{proof}

\begin{proposition}
\textit{Let }$K$\textit{\ and }$C$\textit{\ be disjoint convex subsets of }$%
\mathbb{R}
^{n}$\textit{\ with }$K$\textit{\ compact and }$C$\textit{\ closed. Then, }$%
C $\textit{\ and }$K$\textit{\ are strictly separated by a hyperplane }$H,$%
\textit{\ i.e., }%
\begin{equation*}
\mathit{there\ exists\ }u\in 
\mathbb{R}
^{n},u\neq 0,\mathit{\ with}\text{ }\sup_{x\in C}\langle u,x\rangle
<\min_{x^{\prime }\in K}\langle u,x^{\prime }\rangle .
\end{equation*}%
\bigskip
\end{proposition}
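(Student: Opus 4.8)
The plan is to reduce the compact–closed separation to the point–set separation already proved in Lemma 1. The natural device is the difference set $D:=K-C=\{x'-x:x'\in K,\ x\in C\}$. Since $K$ and $C$ are disjoint, $0\notin D$, so the problem becomes separating the origin from $D$. First I would verify that $D$ is convex (a difference of convex sets) and, crucially, closed: this is where the compactness of $K$ earns its keep, since the difference of two merely closed sets need not be closed, but the sum of a compact set and a closed set is closed. Given closedness and convexity of $D$ with $0\notin D$, Lemma 1 applies directly with $x=0$, producing a vector $u\neq 0$ (namely $u=0-P_D(0)$, up to sign) and a strict inequality of the form $\langle u,d\rangle<\langle u,0\rangle=0$ for all $d\in D$, or equivalently $\langle u,0\rangle<\langle u,d\rangle$ after adjusting the sign of $u$.

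Once the separation of $0$ from $D$ is in hand, I would unwind the definition of $D$: the inequality $\langle u,x'-x\rangle>0$ for all $x'\in K,\ x\in C$ rearranges to $\langle u,x\rangle<\langle u,x'\rangle$ for every pair. From this one passes to the suprema and infima: $\sup_{x\in C}\langle u,x\rangle\le\inf_{x'\in K}\langle u,x'\rangle$. The final sharpening to a \emph{strict} inequality, with the infimum over $K$ attained as a minimum, uses compactness of $K$ once more: the continuous linear functional $x'\mapsto\langle u,x'\rangle$ attains its minimum on the compact set $K$ at some point $x_0'$, and the strict separation of $0$ from the closed set $D$ guarantees a genuine gap, so $\sup_{x\in C}\langle u,x\rangle<\min_{x'\in K}\langle u,x'\rangle$.

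The main obstacle is establishing that $D=K-C$ is closed. I would argue it as follows: let $d_n=x_n'-x_n\to d$ with $x_n'\in K$, $x_n\in C$; by compactness of $K$, a subsequence $x_{n_k}'\to x_0'\in K$, whence $x_{n_k}=x_{n_k}'-d_{n_k}\to x_0'-d$, and since $C$ is closed, $x_0'-d\in C$, giving $d=x_0'-(x_0'-d)\in K-C=D$. Convexity of $D$ is the routine observation that $t(x_1'-x_1)+(1-t)(x_2'-x_2)=(tx_1'+(1-t)x_2')-(tx_1+(1-t)x_2)$ lies in $K-C$. With these two facts secured, the rest is a direct invocation of Lemma 1 and a short translation back through the definition of the difference set.
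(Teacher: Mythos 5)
Your strategy is the same as the paper's: pass to the difference set (the paper uses $C-K$, you use $K-C$), check that it is convex and closed --- closedness being exactly where compactness of $K$ is spent --- observe that it misses the origin because $K\cap C=\emptyset$, and then invoke Lemma 1 to separate $0$ from it. Your sequential argument for closedness of $K-C$ is correct and is in fact more detailed than the paper, which asserts that step without proof; the unwinding of $\langle u, x'-x\rangle>0$ into pairwise inequalities is also the same.

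There is, however, one step that does not hold up as you have written it. You quote Lemma 1 only in the weakened form ``$\langle u,d\rangle<\langle u,0\rangle=0$ for all $d\in D$,'' pass to $\sup_{x\in C}\langle u,x\rangle\le\inf_{x'\in K}\langle u,x'\rangle$, and then recover strictness by asserting that ``the strict separation of $0$ from the closed set $D$ guarantees a genuine gap.'' As a freestanding principle this is false: a closed convex set avoiding the origin can be strictly separated pointwise by a functional whose supremum over the set is nevertheless $0$. For instance, $D=\{(a,b)\in\mathbb{R}^2: a\ge e^{-b}\}$ is closed, convex, misses the origin, and satisfies $\langle u,d\rangle=-a\le -e^{-b}<0$ on $D$ for $u=(-1,0)$, yet $\sup_{d\in D}\langle u,d\rangle=0$. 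So pointwise strictness plus closedness does not survive the passage to suprema, and compactness of $K$ (attainment of the minimum) does not rescue it either, since the supremum over the merely closed set $C$ need not be attained. The missing ingredient is the \emph{middle term} of Lemma 1: with $y=P_D(0)$ and $u=-y$, the lemma gives $\langle u,d\rangle\le\langle u,y\rangle=-\Vert u\Vert^2$ for every $d\in D$, a uniform gap. This is precisely how the paper argues: it writes $\langle u,x\rangle\le\langle u,x'\rangle-\Vert u\Vert^2<\langle u,x'\rangle$ for all $x\in C$, $x'\in K$, from which $\sup_{x\in C}\langle u,x\rangle\le\min_{x'\in K}\langle u,x'\rangle-\Vert u\Vert^2<\min_{x'\in K}\langle u,x'\rangle$ follows at once. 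With that single correction --- carrying $\langle u,y\rangle$ along instead of discarding it --- your proof is complete and coincides with the paper's.
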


\begin{proof}
Since $C$ is closed and $K$ is compact, the set $C-K:=\{y\in 
\mathbb{R}
^{n}:y=x-x^{\prime },x\in C,x^{\prime }\in K\}$ is also closed. It is,
moreover, convex as the difference of convex sets. Since $C\cap K=\emptyset
, $ then $0\notin C-K.$ Lemma 1 applies, yielding for $u=0-P_{C-K}(0),$ the
inequalities: $\langle u,z\rangle \leq \langle u,-u\rangle <\langle
u,0\rangle =0,\forall z\in C-K.$ Thus, as $z=x-x^{\prime },x\in C,x^{\prime
}\in K,$%
\begin{equation*}
\langle u,x\rangle \leq \langle u,x^{\prime }\rangle -\Vert u\Vert
^{2}<\langle u,x^{\prime }\rangle ,\forall x\in C,\forall x^{\prime }\in K.
\end{equation*}%
\bigskip
\end{proof}

A refinement of a fundamental intersection theorem of V. L. Klee for
families of closed convex subsets of $%
\mathbb{R}
^{n\text{ }}$ (see Klee [17] and Berge [5]) plays a crucial role in our
proof. We provide here a simple proof based on Proposition 2.

Topological vector spaces (t.v.s. for short), as well as topological spaces,
are assumed to be Hausdorff (T$_{2}$). Vector spaces are assumed real (or
complex) and the convex hull of a subset $A$ of a vector space is denoted by 
$conv(A).$\bigskip

\begin{proposition}
([11]) \textit{Let }$C_{1},\ldots ,C_{n},$\textit{\ be non empty closed
convex sets in a t.v.s. }$E$\textit{\ such that:}

\textit{(i) }$C=\bigcup_{i=1}^{n}C_{i}$\textit{\ is convex, and}

\textit{(ii) each }$k$ of them, $1\leq k<n,$ have a common point.

\textit{Then }$\bigcap_{i=1}^{n}C_{i}\neq \emptyset .$
\end{proposition}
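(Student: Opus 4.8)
The plan is to argue by contradiction and by induction on $n$, reducing the $n$-set problem to an $(n-1)$-set problem that lives on a lower-dimensional slice. Suppose $\bigcap_{i=1}^{n}C_{i}=\emptyset$. By hypothesis (ii) I may pick, for each $i$, a witness $x_{i}\in\bigcap_{j\neq i}C_{j}$; since the total intersection is empty, necessarily $x_{i}\notin C_{i}$. The key observation is that all the geometry I shall use takes place inside the simplex $S:=conv\{x_{1},\dots,x_{n}\}$, which spans a finite-dimensional affine subspace $V\subseteq E$, so Proposition 2 applies verbatim in $V\cong\mathbb{R}^{d}$. The base case $n=1$ is trivial, so I turn to the inductive step, assuming the statement for families of $n-1$ sets.

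First I would set $A:=\bigcap_{i=1}^{n-1}C_{i}$, which is nonempty, closed and convex by (ii), and which is disjoint from $C_{n}$ precisely because $A\cap C_{n}=\bigcap_{i=1}^{n}C_{i}=\emptyset$. The sets $A\cap S$ and $C_{n}\cap S$ are then disjoint, compact and convex, and nonempty (they contain $x_{n}$ and $x_{1}$ respectively). Proposition 2 yields $u\neq 0$ and a scalar $\beta$ with $A\cap S\subseteq\{\langle u,\cdot\rangle<\beta\}$ and $C_{n}\cap S\subseteq\{\langle u,\cdot\rangle>\beta\}$. I then pass to the slice $\Delta:=S\cap H$, where $H:=\{\langle u,\cdot\rangle=\beta\}$: it is compact, convex, meets neither $A$ nor $C_{n}$, and since $\Delta\subseteq S\subseteq\bigcup_{i=1}^{n}C_{i}$ while $\Delta\cap C_{n}=\emptyset$, it is covered by the first $n-1$ sets, i.e. $\Delta=\bigcup_{i=1}^{n-1}(C_{i}\cap\Delta)$.

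Writing $G_{i}:=C_{i}\cap\Delta$ for $i<n$, I want to apply the induction hypothesis to $G_{1},\dots,G_{n-1}$: these are closed convex with convex union $\Delta$. The only point left to check — and the step I expect to be the main obstacle — is that every $n-2$ of them meet, that is, $\big(\bigcap_{i<n,\,i\neq k}C_{i}\big)\cap\Delta\neq\emptyset$ for each $k<n$. Here the choice of witnesses pays off: both $x_{n}$ and $x_{k}$ lie in $\bigcap_{i<n,\,i\neq k}C_{i}$ (each $x_{\ell}$ belongs to every $C_{i}$ with $i\neq\ell$), so the entire segment $[x_{n},x_{k}]$ lies in that intersection; but $x_{n}$ sits on the side $\{\langle u,\cdot\rangle<\beta\}$ and $x_{k}$ on the side $\{\langle u,\cdot\rangle>\beta\}$, so by the intermediate value theorem the segment crosses $H$, furnishing the required point of $\Delta$. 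The induction hypothesis then gives $\bigcap_{i<n}G_{i}\neq\emptyset$; yet $\bigcap_{i<n}G_{i}=A\cap\Delta=\emptyset$ by the choice of $H$ — a contradiction, which forces $\bigcap_{i=1}^{n}C_{i}\neq\emptyset$. The delicate part throughout is that slicing by $H$ must preserve the reduced family's $(n-2)$-fold intersection property and that the separation must place exactly $x_{n}$ on one side and all other vertices on the other; both are exactly what the witness segments $[x_{n},x_{k}]$ guarantee.
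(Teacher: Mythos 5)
Your proof is correct and is essentially the paper's own argument: both reduce to the compact polytope spanned by the witnesses, separate $C_{n}$ from $A=\bigcap_{i<n}C_{i}$ by a hyperplane via Proposition 2, slice the family by that hyperplane, and verify hypothesis (ii) for the sliced family using the witness segments $[x_{n},x_{k}]$ crossing it. The only (immaterial) difference is bookkeeping: the paper runs the induction in contrapositive form, concluding that the sliced union $\bigcup_{i<n}(C_{i}\cap H)=C\cap H$ fails to be convex, whereas you invoke the induction hypothesis positively on the sliced family and contradict $A\cap\Delta=\emptyset$.
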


\begin{proof}
The proof goes along the lines \ of Klee's proof [17]. One may assume with
no loss of generality that the sets $C_{i},i=1,\ldots ,n,$ are compact
convex subsets of a finite dimensional space. Indeed, one could consider the
convex finite polytope $\hat{C}:=Conv(\{y_{j}:j=1,\ldots ,n\}),$ where, for
each $j=1,\cdots ,n,$ the points $y_{j}\in \bigcap_{i=1,i\neq j}^{n}C_{i}$
are provided by (ii), and define $\hat{C}_{i}:=C_{i}\cap \hat{C}.$ Clearly,
all the sets $\hat{C}_{1},\ldots ,\hat{C}_{n},\hat{C}=\bigcup_{i=1}^{n}\hat{C%
}_{i}$ are compact convex sets in a finite dimensional subspace of $E$ and $%
\bigcap_{i=1}^{n}C_{i}\neq \emptyset \Longleftrightarrow \bigcap_{i=1}^{n}%
\hat{C}_{i}\neq \emptyset .$

If $n=1,$ the thesis clearly holds. Assume, for a contradiction that for $%
n\geq 2,$ $\bigcap_{i=1}^{n}C_{i}=\emptyset $ and let us show that (i) must
fail if (ii) holds true. The proof is by induction on $n.$

If $n=2,$ (ii) asserts that both $C_{1}$ and $C_{2}$ are nonempty and, while
they are disjoint, their union $C=C_{1}\cup C_{2}$ cannot be convex and thus
(i) fails.

Suppose that for $n=k-1,$ it holds $(\bigcap_{i=1}^{k-1}C_{i}=\emptyset $
and $\bigcap_{i=1,i\neq j}^{k-1}C_{i}\neq \emptyset )\Longrightarrow
\bigcup_{i=1}^{k-1}C_{i}$ is not convex.

Let $n=k,$ and let $\{C_{i}\}_{i=1}^{k}$ be a collection of compact convex
sets such that $C_{k}\cap \bigcap_{i=1}^{k-1}C_{i}=\emptyset $ and $\forall
j=1,...,k,$ $\bigcap_{i=1,i\neq j}^{k}C_{i}\neq \emptyset .$ By Proposition
2, the disjoint compact convex sets $C_{k}$ and $\bigcap_{i=1}^{k-1}C_{i}$
can be strictly separated by a hyperplane $H.$ Putting, for each $i=1,\ldots
,k,$ $C_{i}^{\prime }:=H\cap C_{i},$ it follows that $C_{k}^{\prime }$ and $%
\bigcap_{i=1}^{k-1}C_{i}^{\prime }$ are empty. Moreover, for a given
arbitrarily chosen $j_{0}\in \{1,\ldots ,k-1\},$ let $y_{0}\in
\bigcap_{i=1,i\neq j_{0}}^{k}C_{i},$ thus $y_{0}\in C_{k},$ and let $%
y_{k}\in \bigcap_{i=1}^{k-1}C_{i}$ be arbitrarily chosen. Clearly, the
points $y_{0}$ and $y_{k}$ are both in the larger convex set $%
\bigcap_{i=1,i\neq j_{0}}^{k-1}C_{i}$ and are also strictly separated by $H.$
The intersection $\bar{z}$ of the line segment $[y_{0},y_{k}]$ with $H$
belongs to $\bigcap_{i=1,i\neq j_{0}}^{k-1}C_{i}\cap H.$ $j_{0}$ being
arbitrary, hypothesis (ii) is verified for the collection $\{C_{i}^{\prime
}\}_{i=1}^{k-1}$ and $\bigcap_{i=1}^{k-1}C_{i}^{\prime }=\emptyset .$ By the
induction hypothesis, $\bigcup_{i=1}^{k-1}C_{i}^{\prime
}=\bigcup_{i=1}^{k-1}(C_{i}\cap H)$ is not convex$.$ Since $H\cap
C_{k}=\emptyset ,$ it follows that $\bigcup_{i=1}^{k}(C_{i}\cap
H)=\bigcup_{i=1}^{k-1}(C_{i}\cap H)$ is not convex and the proof is
complete.\bigskip
\end{proof}

\begin{remark}
Proposition 3 is due to A. Ghouila-Houri [11] and slightly extends the
following result of V. L. Klee (see also C. Berge [6]):

\textbf{Klee's Theorem }[17]\textbf{: }\textit{Let }$C$\textit{\ and }$%
C_{1},\ldots ,C_{n}$\textit{\ be closed convex sets in a Euclidean space
satisfying: (i) }$C\subseteq \bigcup_{i=1}^{n}C_{i}$\textit{\ and (ii) }$%
C\cap \bigcap_{i=1,i\neq j}^{n}C_{i}\neq \emptyset $\textit{\ for any }$%
j=1,2,...,n.$\textit{\ Then }$C\cap \bigcap_{i=1}^{n}C_{i}\neq \emptyset .$
This can be restated: $(C\cap \bigcap_{i=1,i\neq j}^{n}C_{i}\neq \emptyset $
and $C\cap \bigcap_{i=1}^{n}C_{i}=\emptyset )\Longrightarrow C\nsubseteq
\bigcup_{i=1}^{n}C_{i}.$
\end{remark}

\section{The Convex KKM\ Theorem}

We use the following terminology of Dugundji-Granas (see [7]):\bigskip

\begin{definition}
\textit{Given an arbitrary subset }$X$\textit{\ be of a vector space }$E$%
\textit{, a set-valued map }$\Gamma :X\longrightarrow 2^{E}$\textit{\ is
said to be a KKM map if for every finite subset }$\{x_{1},\ldots
,x_{n}\}\subseteq X$\textit{\ it holds:}%
\begin{equation*}
conv(\{x_{1},\ldots ,x_{n}\})\subset \bigcup\nolimits_{i=1}^{n}\Gamma
(x_{i}).
\end{equation*}%
\bigskip
\end{definition}

\begin{theorem}
(Convex KKM Theorem) \textit{Let }$E$ be a t.v.s.,\textit{\ }$\emptyset \neq
X\subseteq Y\subseteq E$ with $Y$ convex. If\textit{\ }$\Gamma
:X\longrightarrow 2^{Y}$\textit{\ is a set-valued map verifying:}

\textit{(i) }$\Gamma $\textit{\ is a KKM map;}

\textit{(ii) all values of }$\Gamma $\textit{\ are non-empty, closed and
convex.}

\textit{Then, the family }$\{\Gamma (x)\}_{x\in X}$\textit{\ has the finite
intersection property.}

\textit{If in addition, there exists a non-empty subset }$X_{0}$ of $X$
contained in a convex compact subset $D$ of $Y$ such that $\bigcap_{x\in
X_{0}}\Gamma (x)$ \textit{is compact, then }$\bigcap_{x\in X}\Gamma (x)\neq
\varnothing .$\bigskip
\end{theorem}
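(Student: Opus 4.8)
The plan is to prove the two assertions in turn, reducing everything to the Ghouila-Houri intersection theorem (Proposition 3).

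First I would establish the finite intersection property. Take any finite collection $x_1,\ldots,x_n \in X$ and set $C_i := \Gamma(x_i)$; I want to show $\bigcap_{i=1}^n C_i \neq \emptyset$. The natural move is induction on $n$, using Proposition 3 at each stage, but Proposition 3 requires that the \emph{union} of the closed convex pieces be convex — which is not given directly. The KKM hypothesis supplies exactly what is needed in a localized form: on the polytope $P := conv(\{x_1,\ldots,x_n\})$, the KKM condition says $P \subseteq \bigcup_{i=1}^n C_i$, so the sets $P \cap C_i$ cover the convex set $P$. Thus I would work with the traces $D_i := P \cap C_i$, which are closed and convex, and whose union is $P$, hence convex — so hypothesis (i) of Proposition 3 is satisfied for the family $\{D_i\}$. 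For hypothesis (ii) of Proposition 3, I need every $k$ of the $D_i$ (for $1 \le k < n$) to have a common point; I would verify this by a sub-induction, showing that any proper subcollection of the $\Gamma(x_i)$ already has a common point inside the corresponding face. Concretely, for a proper subset $S \subsetneq \{1,\ldots,n\}$, the face $conv(\{x_i : i \notin S\})$ is covered by $\bigcup_{i \notin S} \Gamma(x_i)$ by the KKM property, which drives the recursion down to the base case $n=1$ where $\Gamma(x_1) \ni x_1$ (since $\{x_1\} = conv(\{x_1\}) \subseteq \Gamma(x_1)$, using $x_1 \in X \subseteq Y$). Once hypotheses (i) and (ii) of Proposition 3 hold for $\{D_i\}_{i=1}^n$, it delivers $\bigcap_{i=1}^n D_i \neq \emptyset$, and a fortiori $\bigcap_{i=1}^n \Gamma(x_i) \supseteq \bigcap_{i=1}^n D_i \neq \emptyset$.

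For the second assertion I would pass from the finite intersection property to the full intersection by a standard compactness argument. Since $\bigcap_{x \in X_0} \Gamma(x) =: K$ is compact, it suffices to show that the family $\{K \cap \Gamma(x)\}_{x \in X}$ of closed subsets of $K$ has the finite intersection property; then, $K$ being compact, their total intersection is nonempty, and that intersection is contained in $\bigcap_{x\in X}\Gamma(x)$. So I would take an arbitrary finite set $x_1,\ldots,x_m \in X$ and apply the first part of the theorem to the \emph{enlarged} finite family consisting of $x_1,\ldots,x_m$ together with the points of $X_0$ — or rather, since $X_0$ need not be finite, I would observe that $K \cap \Gamma(x_1) \cap \cdots \cap \Gamma(x_m)$ is itself $\bigl(\bigcap_{x\in X_0}\Gamma(x)\bigr) \cap \Gamma(x_1) \cap \cdots \cap \Gamma(x_m)$, and show this is nonempty.

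The delicate point here is that $X_0$ may be infinite, so the finite intersection property from the first part does not immediately bound $\bigcap_{x \in X_0}\Gamma(x)$ below. The remedy is that each $K \cap \Gamma(x_j)$ is closed in the compact set $K = \bigcap_{x\in X_0}\Gamma(x)$, and by the finite intersection property any finite subfamily $\{K \cap \Gamma(x_{j})\}$ is nonempty: indeed $K \cap \Gamma(x_1) \cap \cdots \cap \Gamma(x_m)$ equals a finite intersection of $\Gamma$-values intersected with the nonempty (hence, via the first part applied to finitely many representatives, consistent) family over $X_0$, and the finite intersection property proved above guarantees every finite subintersection of $\{\Gamma(x)\}_{x\in X}$ is nonempty. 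Hence $\{K \cap \Gamma(x)\}_{x \in X}$ is a family of closed subsets of the compact space $K$ with the finite intersection property, so $\bigcap_{x\in X}\bigl(K \cap \Gamma(x)\bigr) = \bigcap_{x\in X}\Gamma(x) \neq \emptyset$. The main obstacle I anticipate is precisely the bookkeeping in the first part — correctly organizing the nested induction so that hypothesis (ii) of Proposition 3 (common points of every proper subcollection) is discharged via the KKM property on the appropriate faces, rather than assumed; the compactness step in the second part is then routine.
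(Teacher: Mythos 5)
Your first part is correct and is essentially the paper's own argument: a strong induction on the number of points, in which the strengthened statement $conv(\{x_{1},\ldots ,x_{n}\})\cap \bigcap_{i=1}^{n}\Gamma (x_{i})\neq \emptyset$ is proved by applying Proposition 3 to the traces $D_{i}=conv(\{x_{1},\ldots ,x_{n}\})\cap \Gamma (x_{i})$, whose union is convex precisely because $\Gamma$ is KKM, and whose proper subfamilies meet by the induction hypothesis applied on the corresponding faces. That is exactly how the paper proves the finite intersection property.

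The second part, however, has a genuine gap, located exactly at the point you yourself flagged as ``delicate.'' Your remedy is circular: the set $K\cap \Gamma (x_{1})\cap \cdots \cap \Gamma (x_{m})=\bigl(\bigcap_{x\in X_{0}}\Gamma (x)\bigr)\cap \Gamma (x_{1})\cap \cdots \cap \Gamma (x_{m})$ is an \emph{infinite} intersection when $X_{0}$ is infinite, and the finite intersection property of $\{\Gamma (x)\}_{x\in X}$ says nothing about it; you cannot even conclude that $K\neq \emptyset$ (the empty set is compact, so the compactness hypothesis on $K$ is of no help for nonemptiness). Tellingly, your argument never uses the hypothesis that $X_{0}$ is contained in a compact convex subset $D$ of $Y$ --- and any proof that ignores this hypothesis must fail, as the following example shows: take $E=X=Y=\mathbb{R}$, $\Gamma (x)=[x,\infty )$, $X_{0}=\mathbb{N}$. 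This $\Gamma$ is a KKM map with nonempty closed convex values, the family $\{\Gamma (x)\}_{x\in X}$ has the finite intersection property, and $\bigcap_{x\in X_{0}}\Gamma (x)=\emptyset$ is compact; yet $\bigcap_{x\in X}\Gamma (x)=\emptyset$. The missing idea is the paper's compression argument: for $x\in X_{0}$ put $\Gamma _{0}(x):=\Gamma (x)\cap D$; this is again a KKM map into the convex set $D$, with \emph{compact} convex nonempty values, so the first part of the theorem together with compactness of $D$ yields $\bigcap_{x\in X_{0}}\Gamma _{0}(x)\neq \emptyset$, hence $K\neq \emptyset$. To obtain the finite intersection property of $\{K\cap \Gamma (x)\}_{x\in X}$ that your reduction requires, compress instead to $D_{m}:=conv(D\cup \{x_{1},\ldots ,x_{m}\})$, which is again compact, convex and contained in $Y$ (the convex hull of a compact convex set and finitely many points is compact --- the same fact the paper invokes in the proof of Theorem 12): the map $x\mapsto \Gamma (x)\cap D_{m}$ defined on $X_{0}\cup \{x_{1},\ldots ,x_{m}\}$ is KKM with compact values, so its total intersection is nonempty and is contained in $K\cap \Gamma (x_{1})\cap \cdots \cap \Gamma (x_{m})$. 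With that in hand, your final compactness step goes through verbatim.
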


\begin{proof}
We prove that Proposition 3 is equivalent to Theorem 6.

$(\Longrightarrow )$ Let\textit{\ }$\Gamma :X\longrightarrow 2^{Y}$\ be a
KKM map with closed convex values. We show by induction on $n$ that $%
conv(\{x_{1},\ldots ,x_{n}\})\cap \bigcap_{i=1}^{n}\Gamma (x_{i})\neq
\emptyset ,$ for any finite subset $\{x_{1},\ldots ,x_{n}\}$ of $X.$

When $n=1,$ $x_{1}=conv(\{x_{1}\})\subset \Gamma (x_{1}).$

Assume that the conclusion holds true for any set with $n=k$ elements, and
let $n=k+1.$ Put $C=conv(\{x_{1},\ldots ,x_{n}\})$ and $C_{i}=\Gamma
(x_{i})\cap C.$ Since $\Gamma $ is KKM, $C\subseteq \bigcup_{i=1}^{n}\Gamma
(x_{i})$ which implies $C=\bigcup_{i=1}^{n}(\Gamma (x_{i})\cap
C)=\bigcup_{i=1}^{n}C_{i}$, a convex set. By the induction hypothesis, for
each $i,$ we have $conv(\{x_{1},\ldots ,\hat{x}_{i},\ldots ,x_{n}\})\cap
\bigcap_{j=1,j\neq i}^{n}\Gamma (x_{j})\neq \emptyset .$ Proposition 4
implies that $\bigcap_{i=1}^{n}(\Gamma (x_{i})\cap C)\neq \emptyset ,$ i.e., 
$\bigcap_{i=1}^{n}\Gamma (x_{i})\neq \emptyset $.\bigskip

$(\Longleftarrow )$ Assume $C_{1},\ldots ,C_{n},C=\bigcup_{i=1}^{n}C$ are
closed convex sets in a topological vector space satisfying hypotheses (i)
and (ii) of Proposition 3 above.

For each $j,$ let $x_{j}\in $ $\bigcap\nolimits_{i=1,i\neq j}^{n}C_{i}$ and
consider $X=\{x_{j}\}_{j=1}^{n}.$ The set $C$ being convex, $%
conv(X)\subseteq C$ and for all $j,i$ with $j\neq i,$ $x_{j}\in C_{i},$
which implies that $A_{i}=conv(\{x_{j}\}_{j=1,j\neq i}^{n})$ $\subset C_{i}.$
Define $\Gamma :X\longrightarrow 2^{C}$ by $\Gamma (x_{i}):=C_{i}$ for each $%
i=1,...,n.$ The values of $\Gamma $ are clearly closed and convex. Also, $%
conv(X)\subseteq C$ $=\bigcup_{i=1}^{n}(C_{i}\cap C)=\bigcup_{i=1}^{n}\Gamma
(x_{i}),$ and for each $\{x_{i_{1}},\ldots ,x_{i_{k}}\}\subset X,$ we have $%
conv(\{x_{i_{1}},\ldots ,x_{i_{k}}\})\subset A_{i_{j}}\subset
C_{i_{j}}=\Gamma (x_{i_{j}})$ for some $j\neq 1,\ldots ,k.$ Hence $%
conv(\{x_{i_{1}},\ldots ,x_{i_{k}}\})\subset \bigcup_{j=1}^{k}\Gamma
(x_{i_{j}}),$ i.e., $\Gamma $ is a KKM map. By the convex KKM theorem, $%
\bigcap_{i=1}^{n}\Gamma (x_{i})\neq \emptyset ,$ thus $%
\bigcap_{i=1}^{n}C_{i}\neq \emptyset .$

Assuming for a moment that $\bigcap_{x\in X}\Gamma (x)$ is contained in a
compact subset $K$ of $Y,$ then the conclusion $\bigcap_{x\in X}\Gamma
(x)\neq \varnothing $ would follow at once from the characterization of
compactness in terms of families of closed subsets having the finite
intersection property.

Observe now that the restriction/compression map $\Gamma
_{0}:X_{0}\longrightarrow 2^{D}$ defined by $\Gamma _{0}(x):=\Gamma (x)\cap
D,x\in X_{0},$ has compact convex values and is also a KKM map. Indeed, for
any subset $\{x_{1},\ldots ,x_{n}\}\subseteq X_{0},$ $conv(\{x_{1},\ldots
,x_{n}\})\subset (\bigcup_{i=1}^{n}\Gamma (x_{i}))\cap
D=\bigcup_{i=1}^{n}\Gamma _{0}(x_{i}).$ Therefore, $\bigcap_{x\in
X_{0}}\Gamma (x)\supseteq \bigcap_{x\in X_{0}}\Gamma _{0}(x)\neq \varnothing
.$ The conclusion follows immediately from the fact that $\bigcap_{x\in
X}\Gamma (x)\subseteq \bigcap_{x\in X_{0}}\Gamma (x)$ is compact and
non-empty.
\end{proof}

\begin{remark}
(i) Theorem 6 is an extension to topological vector spaces of the elementary
KKM theorem of Granas-Lassonde, stated in the context of super-reflexive
Banach spaces [10].

(ii) Theorem 6 obviously follows from the KKM\ principle of Ky Fan [12]
where the values of $\Gamma $ are not assumed to be convex. The latter
requires, however, much more involved analytical or topological results.
Indeed, the Ky Fan KKM\ principle is equivalent to Sperner's lemma, to the
Brouwer fixed point theorem, and to the Browder-Ky Fan fixed point theorem
(see e.g., [1, 2, 3]).

(iii)\ In this generality, the compactness condition in the KKM\ principle
is due to Ky Fan [14]. It obviously extends the earlier compactness
conditions: $Y$ is also compact, or all values of $\Gamma $ are compact, or
a single value $\Gamma (x_{0})$ is compact, or $\bigcap_{i=1}^{n}\Gamma
(x_{i})$ is compact for some\ finite subset $\{x_{1},\ldots ,x_{n})$ of $X$%
.\bigskip
\end{remark}

Naturally, the convex KKM\ theorem can be expressed as an equivalent fixed
point property for what we call a \textit{von Neumann relation}. Given a
subset $A$ of a cartesian product of two sets $X\times Y,$ denote by $A(x)$
and $A^{-1}(y)$ the respective sections $\{y\in Y:(x,y)\in A\}$ and $\{x\in
X:(x,y)\in A\};$ denote by $A^{-1}$ the subset $\{(y,x):(x,y)\in A\}.$

\begin{definition}
A von Neumann relation is a subset $A$ of a cartesian product $X\times Y,$
where $X$ and $Y$ are subsets of topological vector spaces, satisfying:

(i) for every $x\in X,$ the section $A(x)$ is convex and non-empty;

(ii) for every $y\in Y,$ the section $A^{-1}(y)$ is open in $X$ and $%
X\setminus A^{-1}(y)$ is convex.

Denote by $\mathcal{N}(X,Y)$ the class of von Neumann relations in $X\times
Y $ and by $\mathcal{N}^{-1}(X,Y):=\{A:X\longrightarrow 2^{Y}:A^{-1}\in 
\mathcal{N}(Y,X)\}.$
\end{definition}

Note that von Neumann relations are particular cases of $F^{\ast }-$maps (%
\textit{applications de Ky Fan}) introduced in [3].

\begin{theorem}
(Fixed Point for $\mathcal{N}-$maps) Let\textit{\ }$E$ be a t.v.s.,\textit{\ 
}$\emptyset \neq Y\subseteq X\subseteq E$ with $X$ convex, and let\textit{\ }%
$A\in \mathcal{N}(X,Y).$ If there exist a compact subset $K$ of $X$ and a
compact convex subset $D$ of $Y$ such that for every $x\in X\setminus K,$ $%
A(x)\cap D\neq \emptyset $, then $A$ has a fixed point, i.e., $(\hat{x},\hat{%
x})\in A$ for some $\hat{x}\in X.$
\end{theorem}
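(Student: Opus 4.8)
The plan is to argue by contradiction and to reduce the existence of a fixed point to the non-emptiness of an intersection supplied by the convex KKM theorem (Theorem 6). Suppose $A$ has no fixed point, so that $(x,x)\notin A$ for every $x\in X$. I would introduce the complementary set-valued map $\Gamma:Y\longrightarrow 2^{X}$ defined by $\Gamma(y):=X\setminus A^{-1}(y)$ for each $y\in Y$. Condition (ii) of Definition 8 makes each $A^{-1}(y)$ open in $X$ with convex complement, so that every value $\Gamma(y)$ is closed and convex. Moreover, since $Y\subseteq X\subseteq E$ with $X$ convex, these data match exactly the hypotheses of Theorem 6 once we interchange the roles of the index set and the ambient convex set: here the index set is $Y$ and the surrounding convex set is $X$.

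Next I would verify, again invoking the absence of fixed points, that $\Gamma$ is a KKM map with non-empty values. If some value were empty, say $\Gamma(y)=\emptyset$, then $A^{-1}(y)=X\ni y$ would give $y\in A(y)$, a fixed point; hence all values are non-empty. For the KKM property, take a finite set $\{y_{1},\dots,y_{n}\}\subseteq Y$ and a point $z\in conv(\{y_{1},\dots,y_{n}\})$; since $X$ is convex and $y_{i}\in Y\subseteq X$, we have $z\in X$. If $z\notin\bigcup_{i=1}^{n}\Gamma(y_{i})$, then $z\in A^{-1}(y_{i})$, i.e.\ $y_{i}\in A(z)$, for every $i$, and the convexity of the section $A(z)$ (condition (i)) would force $z\in conv(\{y_{1},\dots,y_{n}\})\subseteq A(z)$, again yielding a fixed point. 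Therefore $conv(\{y_{1},\dots,y_{n}\})\subseteq\bigcup_{i=1}^{n}\Gamma(y_{i})$, so $\Gamma$ is a KKM map.

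The crux of the argument is converting the coercivity hypothesis into the compactness condition required by Theorem 6. I would take $Y_{0}:=D$, a non-empty subset of $Y$ contained in the compact convex set $D\subseteq X$. A point $x$ lies in $\bigcap_{y\in D}\Gamma(y)$ precisely when $y\notin A(x)$ for all $y\in D$, that is, when $A(x)\cap D=\emptyset$; by the assumed coercivity this forces $x\in K$. Consequently $\bigcap_{y\in D}\Gamma(y)$ is a closed set (an intersection of the closed sets $\Gamma(y)$) contained in the compact $K$, hence compact. Theorem 6 then delivers $\bigcap_{y\in Y}\Gamma(y)\neq\emptyset$.

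Finally I would extract the contradiction. Any element $x\in\bigcap_{y\in Y}\Gamma(y)$ satisfies $(x,y)\notin A$ for every $y\in Y$, i.e.\ $A(x)=\emptyset$, which contradicts the non-emptiness of every section $A(x)$ demanded by condition (i) of Definition 8. Thus the fixed-point-free assumption is untenable and $A$ must have a fixed point. I expect the only delicate points to be the bookkeeping in matching Theorem 6's labelling ($X\subseteq Y$ there versus $Y\subseteq X$ here) and in checking that the translated coercivity condition produces exactly the compact intersection $\bigcap_{y\in D}\Gamma(y)\subseteq K$ that the compactness clause of Theorem 6 requires.
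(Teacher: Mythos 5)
Your proof is correct and follows essentially the same route as the paper: both define $\Gamma(y):=X\setminus A^{-1}(y)$, translate the coercivity hypothesis into the compactness condition $\bigcap_{y\in D}\Gamma(y)\subseteq K$, and use the convexity of the sections $A(\cdot)$ to turn a failure of the KKM property into a fixed point. The only differences are organizational — you argue by contradiction where the paper argues contrapositively (non-empty sections force empty intersection, hence $\Gamma$ is not KKM, hence a fixed point) — and you explicitly handle the case of an empty value $\Gamma(y)$, a small point the paper glosses over.
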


\begin{proof}
Define $\Gamma :Y\longrightarrow 2^{X}$ as $\Gamma (y):=Y\setminus
A^{-1}(y),y\in Y.$ Clearly, $\Gamma $ has closed and convex values. Also,
obviously, $A(x)=Y\setminus \Gamma ^{-1}(x),$ for any $x\in X.$

One readily verifies that the compactness condition in Theorem 9 is
equivalent to the compactness condition in Theorem 6. Indeed, $(x\notin
K\Longrightarrow \exists y\in A(x)\cap D)\Longleftrightarrow ((\forall y\in
D,y\notin A(x))\Longrightarrow x\in K)\Longleftrightarrow (\bigcap_{y\in
D}\Gamma (y)\subseteq K).$ The intersection $\bigcap_{y\in D}\Gamma (y)$
being closed in $K$ which is compact, is also compact. For any subset $Y_{0}$
of $D,$ it also holds $\bigcap_{y\in Y_{0}}\Gamma (y)$ is a compact subset
of $K.$

The fact that all sections $A(x),x\in X,$ are non-empty rules out the thesis
of Theorem 6 (indeed, $(A(x)\neq \emptyset ,\forall x\in
X)\Longleftrightarrow $\textit{\ }$\bigcap_{y\in Y}\Gamma (y)=\varnothing ).$
Therefore $\Gamma $ cannot be a KKM map, i.e., there exist $\{y_{1},\ldots
,y_{n}\}\subseteq Y$ and $\hat{y}\in conv(\{y_{1},\ldots ,y_{n}\})$ with $%
\hat{y}\notin \bigcup_{i=1}^{n}\Gamma (y_{i}),$ which (by DeMorgan's law) is
equivalent to $\hat{y}\in $ $\bigcap_{i=1}^{n}A^{-1}(y_{i})%
\Longleftrightarrow \{y_{1},\ldots ,y_{n}\}\subseteq A(\hat{y}).$ Since $A(%
\hat{y})$ is convex, $\hat{y}\in A(\hat{y})\,\ $and the proof is complete.
\end{proof}

\begin{remark}
(i) The proof of Theorem 9 clearly establishes its equivalence with Theorem
6.

(ii) Theorem 9 is a particular instance of the Browder-Ky Fan fixed point
theorem (where the convexity of $X\setminus A^{-1}(y)$ in Definition 8 is
dispensed with; see e.g., [2, 3, 4]).

(iii) Note that if $X$ is compact, the compactness condition in Theorem 9 is
vacuously satisfied with $K=X.$ To the best of our knowledge, in this
generality and in the context of the Browder-Ky Fan fixed point theorem,
this condition was first introduced in [2, 3] in 1982. It builds on the
so-called Karamardian \textit{coercivity condition} for complementarity
problems (early seventies), taken up in 1977 by Allen in the context of
fixed point theorems for set-valued maps (case where $K=C$); see [2, 3, 9]
for references and details.
\end{remark}

We end this section with a coincidence theorem between $\mathcal{N}$ and $%
\mathcal{N}^{-1}$ maps with a direct proof based on Proposition 3. We shall
make use of a well-known selection property enjoyed by $F^{\ast }-$maps of
[2], thus by $\mathcal{N}-$maps (see [2, 3]).

\begin{lemma}
Let $A\in \mathcal{N}(X,Y)$ with $Y$ convex. For any compact subset $K$ of $%
X $, there exist a continuous (single-valued) mapping $s:K\longrightarrow Y$
and a convex compact finite polytope $P\subseteq Y$ such that $s(x)\in
A(x)\cap P$ for all $x\in K.$
\end{lemma}

\begin{proof}
Since $\forall x\in X,A(x)\neq \emptyset ,$ then $X=\bigcup_{y\in
Y}A^{-1}(y) $, a union of open subsets of $X.$ By compactness, $K\subseteq
\bigcup_{i=1}^{n}A^{-1}(y_{i})$ for some finite subset $\{y_{1},\ldots
,y_{n}\}\subset Y.$ Let $\{\lambda _{i}:K\longrightarrow \lbrack
0,1]\}_{i=1}^{n}$ be a continuous partition of unity subordinated to the
open cover $\{A^{-1}(y_{i})\}_{i=1}^{n},$ and define $s:K\longrightarrow
P=conv(\{y_{1},\ldots ,y_{n}\})\subset Y$\ by putting $s(x):=\sum_{i=1}^{n}%
\lambda _{i}(x)y_{i},x\in K.$ Clearly, for a given $x\in K,$ $\lambda
_{i}(x)\neq 0\Longrightarrow x\in A^{-1}(y_{i})\Longleftrightarrow y_{i}\in
A(x).$ The section $A(x)$ being convex, the convex combination $s(x)\in
A(x)\cap P.$
\end{proof}

\begin{theorem}
(Coincidence $(\mathcal{N},\mathcal{N}^{-1})$) Let $X$ and $Y$ be two
non-empty convex subsets in topological vector spaces, and let $A\in 
\mathcal{N}(X,Y),B\in \mathcal{N}^{-1}(X,Y).$

If one of the following compactness conditions holds: (i) $Y$ is compact; or
(ii) $X$ is compact; or (iii) there exist a compact subset $K$ of $X$ and a
compact convex subset $C$ of $Y$ with $A(x)\cap C\neq \emptyset ,\forall
x\in X\setminus K;$ then $A\cap B\neq \emptyset .$
\end{theorem}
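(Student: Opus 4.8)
The plan is to convert the coincidence $A\cap B\neq\emptyset$ into a fixed point problem for a single von Neumann relation and then invoke Theorem 9 (equivalently Proposition 3). First I would record that a point $(\hat{x},\hat{y})\in A\cap B$ is precisely a pair with $\hat{y}\in A(\hat{x})$ and $\hat{x}\in B^{-1}(\hat{y})$. Since $B\in\mathcal{N}^{-1}(X,Y)$ means $B^{-1}\in\mathcal{N}(Y,X)$, the section $B^{-1}(y)$ is convex and non-empty for every $y$, while $A(x)$ is convex and non-empty for every $x$; hence both maps admit continuous selections on compact sets by Lemma 11. I would single out $A$ and produce, on a suitable compact set, a continuous selection $s$ with $s(x)\in A(x)$ landing in a finite polytope $P\subseteq Y$, thereby replacing the multivalued $A$ by a single continuous map.

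Next I would unify the three compactness hypotheses to secure a compact convex set on which the selection lives and to which the whole argument is confined: in case (ii) take $K=X$; in case (i) take $C=Y$ and instead select from $B^{-1}$ over the compact $Y$; and in case (iii) use the stated coercivity verbatim, exactly as in the compactness clause of Theorem 9. With $s$ in hand, the coincidence reduces to finding $\hat{x}$ with $(\hat{x},s(\hat{x}))\in B$, i.e. $\hat{x}\in B^{-1}(s(\hat{x}))$; equivalently, a fixed point of the self-relation $G:X\to 2^{X}$, $G(x):=B^{-1}(s(x))$. This $G$ inherits convex non-empty values from $B^{-1}$, and its fibers $G^{-1}(x')=s^{-1}(B(x'))$ are open because $s$ is continuous and $B(x')$ is open; a fixed point $\hat{x}\in G(\hat{x})$ would give $s(\hat{x})\in A(\hat{x})\cap B(\hat{x})$, hence $A\cap B\neq\emptyset$.

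The hard part will be the convex-complement requirement of Definition 8 for $G$: Theorem 9, and through it the elementary Proposition 3, needs $X\setminus G^{-1}(x')=s^{-1}(Y\setminus B(x'))$ to be convex, and although $Y\setminus B(x')$ is convex by the $\mathcal{N}^{-1}$ hypothesis, the selection $s$ built from a partition of unity is not affine, so this pullback need not be convex. This is exactly the gap that in the general Browder--Ky Fan coincidence is bridged by the Brouwer fixed point theorem. To stay within the elementary framework I would avoid forming the composite self-map and instead argue directly by Proposition 3 inside the finite-dimensional polytope furnished by the selection, exploiting simultaneously that \emph{both} $Y\setminus B(x)$ and $X\setminus A^{-1}(y)$ are convex: assuming no coincidence yields $A(x)\subseteq Y\setminus B(x)$ for every $x$, and I would translate the resulting covering of the polytope into a finite family of closed convex sets to which the intersection Proposition 3 applies, the absence of a common point contradicting the non-emptiness of the sections. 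Verifying that this covering meets the hypotheses of Proposition 3 -- in particular extracting the required common points of each sub-collection from the convexity of the two complements -- is the step I expect to be most delicate.
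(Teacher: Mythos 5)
Your diagnosis of the composite-map obstruction is exactly right: $G(x)=B^{-1}(s(x))$ has open fibers, but $X\setminus G^{-1}(x')=s^{-1}(Y\setminus B(x'))$ need not be convex since $s$ is not affine, so Theorem 9 cannot be applied and the elementary framework forces a different route. Your fallback, however, stops precisely where the proof has to be done, and the one mechanism you point to is the wrong one. Applying Proposition 3 (or Klee's theorem of Remark 4) to the sets $M_{i}=M\setminus B(x_{i})$ inside a compact polytope $M$ requires verifying the partial-intersection hypothesis: each proper subfamily must have a common point with $M$. You propose to extract these common points "from the convexity of the two complements", but convexity only makes the sets $M_{i}$ convex so that Klee's theorem applies at all; it produces no common points. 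The device that actually delivers the hypothesis --- and which is absent from your sketch --- is \emph{minimality of the finite subcovers}: the cover of $M$ by finitely many $B(x_{i})$ exists because every section $B^{-1}(y)$ is non-empty (no contradiction hypothesis is needed, the argument is direct); discarding sets until the cover is minimal gives, for each $k$, a point of $M\setminus \bigcup_{i\neq k}B(x_{i})$, i.e. $M\cap \bigcap_{i\neq k}M_{i}\neq \emptyset$, while $\bigcap_{i=1}^{n}M_{i}=\emptyset$ because the family covers $M$. Klee's theorem then yields $M\nsubseteq \bigcup_{i=1}^{n}M_{i}$, i.e. a point $y_{0}\in M$ lying in \emph{every} $B(x_{i})$.

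The second missing ingredient is the interlocking, symmetric double-cover structure that turns this into a coincidence. One arranges $\{B(x_{i})\}_{i=1}^{n}$ to be a minimal cover of $M=conv(\{y_{1},\ldots ,y_{m}\})$ and $\{A^{-1}(y_{j})\}_{j=1}^{m}$ to be a minimal cover of $D=conv(\{x_{1},\ldots ,x_{n}\})$, and applies Klee's theorem to both families $\{M,M_{1},\ldots ,M_{n}\}$ and $\{D,D_{1},\ldots ,D_{m}\}$, where $D_{j}=D\setminus A^{-1}(y_{j})$. This produces $y_{0}\in M$ with $x_{i}\in B^{-1}(y_{0})$ for all $i$, and $x_{0}\in D$ with $y_{j}\in A(x_{0})$ for all $j$. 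It is then the convexity of the \emph{sections} $B^{-1}(y_{0})$ and $A(x_{0})$ --- not of the complements --- that finishes: $D\subseteq B^{-1}(y_{0})$ and $M\subseteq A(x_{0})$, hence $(x_{0},y_{0})\in A\cap B$. You do name the right tool, and your reduction of the three compactness cases matches the paper's (which uses Lemma 11 and $\hat{Y}=conv(P\cup C)$ to collapse (iii) to (i)); but without the minimality device and this symmetric construction, the "most delicate step" you defer is in fact the entire proof.
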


\begin{proof}
Let it be made clear, first, that $(i)\Longrightarrow (iii)$ and $%
(ii)\Longrightarrow (iii).$ Indeed, if $Y$ is compact, take $C=Y$ and $%
K=\emptyset $ in $(iii).$ If $X$ is compact, take $K=X$ and $(iii)$ is
vacuously satisfied. Moreover, due to Lemma 11, $(iii)$ can be reduced to $%
(i)$. Indeed, assume that $(iii)$ holds. Lemma 11 implies the existence of a
convex finite polytope $P\subset Y$ and a continuous mapping $%
s:K\longrightarrow Y$ with $s(x)\in A(x)\cap P$ for all $x\in K.$ Now, if $%
x\in X\setminus K,A(x)\cap C\neq \emptyset $ where $C\subset Y$ is convex
and compact$.$ Consider the convex hull $\hat{Y}=conv(P\cup C)$, a compact
subset of $Y$ (the convex envelope of two compact convex sets in a
topological vector space is also compact convex). It is clear that the map $%
\hat{A}:X\longrightarrow 2^{\hat{Y}}$ given by $\hat{A}(x):=A(x)\cap \hat{Y}%
,x\in X,$ defines a von Neumann relation, i.e., $\hat{A}\in \mathcal{N}(X,%
\hat{Y}).$ Also, the mapping $\hat{B}:X\longrightarrow 2^{\hat{Y}}$ given by 
$\hat{B}(x):=B(x)\cap \hat{Y},x\in X,$ verifies $\hat{B}\in \mathcal{N}%
^{-1}(X,\hat{Y}).$ A coincidence for the pair $(\hat{A},\hat{B})$ is also a
coincidence for $(A,B).$

It suffices, thus, to show that $A\cap B\neq \emptyset $ under hypothesis $%
(i),$ i.e., when $Y$ is compact. Since $\forall y\in Y,B^{-1}(y)\neq
\emptyset ,$ it follows that $\{B(x)\}_{x\in X}$ forms an open cover of $Y.$
Similarly, $\{A^{-1}(y)\}_{y\in Y}$ is an open cover of $X.$ Let $%
\{B(x_{i})\}_{i=1}^{n}$ be a finite subcover of $Y,$ let $%
D:=conv(\{x_{1},\ldots ,x_{n}\}),$ a convex compact subset of $X,$ and let $%
\{A^{-1}(y_{j})\}_{j=1}^{m}$ be an open subcover of $D.$ Consider the convex
compact subset $M:=conv(\{y_{1},\ldots ,y_{m}\})$ of $Y.$ $M$ can be covered
by a subfamily of $\{B(x_{i})\}_{i=1}^{n},$ which, for simplicity we also
denote $\{B(x_{i})\}_{i=1}^{n}.$ We can assume with no loss of generality
that $\{B(x_{i})\}_{i=1}^{n}$ and $\{A^{-1}(y_{j})\}_{j=1}^{m}$ are minimal
covers of $M$ and $D$ respectively. That is, for any $k\in \{1,\ldots
,n\},M\nsubseteq \bigcup\nolimits_{i=1,i\neq k}^{n}B(x_{i}),$ and for any $%
l\in \{1,\ldots ,m\},D\nsubseteq \bigcup\nolimits_{j=1,j\neq
l}^{m}A^{-1}(y_{j}).$ Consider the compact convex sets $M_{i}=M\setminus
B(x_{i})$ for for $i=1,\ldots ,n,$ and $D_{j}=D\setminus A^{-1}(y_{j})$ for $%
j=1,\ldots ,m.$ The fact that $M\subseteq \bigcup\nolimits_{i=1}^{n}B(x_{i})$
is equivalent to $\bigcap\nolimits_{i=1}^{n}M_{i}=\emptyset $, and $%
D\subseteq \bigcup\nolimits_{j=1}^{m}A^{-1}(y_{j})$ is equivalent to $%
\bigcap\nolimits_{j=1}^{m}D_{j}=\emptyset .$ The minimality of the covers $%
\{B(x_{i})\}_{i=1}^{n}$ and $\{A^{-1}(y_{j})\}_{j=1}^{m}$ amounts to $M\cap
\bigcap\nolimits_{i=1,i\neq k}^{n}M_{i}\neq \emptyset $ for any $k\in
\{1,\ldots ,n\},$ and $D\cap \bigcap\nolimits_{j=1,j\neq l}^{m}D_{j}\neq
\emptyset $ for any $l\in \{1,\ldots ,m\}.$ All conditions of Klee's theorem
(see Remark 4) are thus satisfied for both families of compact convex sets $%
\{M,M_{1},\ldots ,M_{n}\}$ and $\{D,D_{1},\ldots ,D_{m}\}.$ Hence, $%
M\nsubseteq \bigcup\nolimits_{i=1}^{n}M_{i}$ and $D\nsubseteq
\bigcup\nolimits_{j=1}^{m}D_{j}.$ Let \thinspace $y_{0}\in M$ but $%
y_{0}\notin M_{i}$ for all $i=1,\ldots ,n,$ and let $x_{0}\in D$ but $%
x_{0}\notin D_{j}$ for all $j=1,\ldots ,m.$ Clearly, $y_{0}\in
B(x_{i})\Longleftrightarrow x_{i}\in B^{-1}(y_{0})$ for all $i=1,\ldots ,n,$
and $x_{0}\in A^{-1}(y_{j})\Longleftrightarrow y_{j}\in A(x_{0})$ for all
for all $j=1,\ldots ,m.$ The sections $B^{-1}(y_{0})$ and $A(x_{0})$ being
convex sets, it follows that $x_{0}\in D=conv(\{x_{1},\ldots
,x_{n}\})\subset B^{-1}(y_{0})$ and $y_{0}\in M=conv(\{y_{1},\ldots
,y_{m}\})\subset A(x_{0}).$ The proof is finished as $(x_{0},y_{0})\in A\cap
B.$
\end{proof}

\section{Analytic Formulations and Applications}

This section illustrates how the geometric results in the preceding section,
Theorems 6, 9, and 12, are key in deriving a number of landmark results in
functional analysis. Intersection theorems as well as fixed point and
coincidence theorems have analytical formulations as solvability theorems
for systems of nonlinear inequalities (see [1, 2, 3, 4, 9]). These
analytical formulations are often more practical when it comes to
applications. We start with the analytical formulation of the convex KKM\
principle (equivalently, the fixed point theorem for von Neumann relations)
and we derive from it, in a simple and straightforward way, two fundamental
results.\bigskip

\subsection{Alternatives for Systems of Nonlinear Inequalities and
Applications}

Theorems 6 and 9 can be expressed in terms of an alternative for nonlinear
systems of inequalities \textit{\`{a} la Ky Fan}.

Recall first the basic concepts of semicontinuity and quasiconvexity for
real functions.\bigskip

\begin{definition}
\textit{A real function }$f:X\longrightarrow 
\mathbb{R}
$\textit{\ defined on a subset }$X$\textit{\ of a t.v.s is:}

(i) \textit{quasiconvex} if $\forall \lambda \in 
\mathbb{R}
,$ the level set $\{x\in X;f(x)\leq \lambda \}$ is a convex subset of $X;$

(ii) \textit{quasiconcave} if $-f$ is quasiconvex;

(iii) \textit{lower semicontinuous} \textit{(l.s.c.) }if $\forall \lambda
\in 
\mathbb{R}
,$ the level set $\{x\in X;f(x)\leq \lambda \}$\textit{\ }is a closed subset
of $X;$

(iv) \textit{upper semicontinuous} \textit{(u.s.c.) }if $-f$ is
l.s.c..\bigskip
\end{definition}

Naturally, every convex functional is quasiconvex and the converse is false.
Also, a real function on a topological space is continuous if and only if it
is both upper and lower semicontinuous.

\begin{theorem}
Let $X$ be a convex subset of a t.v.s. $E$, $Y$ a non-empty subset of $X,$
and $f:X\times Y\longrightarrow 
\mathbb{R}
$\textit{\ a function satisfying:}

(i) $x\mapsto f(x,y)$\ is l.s.c. and quasiconvex on $X,$ for each fixed $%
y\in Y.$

(ii) $y\mapsto f(x,y)$\ quasiconcave on $Y,$\ for each fixed $x\in X;$

Assume that for a given\ $\lambda \in 
\mathbb{R}
,$ there exist a compact subset $K$ of $X$ and a convex compact subset $D$
of $Y$

such that $\forall x\in X\setminus K,\exists y\in D$ with $f(x,y)>\lambda .$

Then the following alternative holds:

(A) there exists $x_{0}\in X$\ such that $f(x_{0},x_{0})>\lambda ,$\ or

(B) there exists $\bar{x}\in Y$\ such that $f(\bar{x},y)\leq \lambda ,$\ for
all $y\in Y.$

Consequently, when $\lambda =\sup_{x\in X}f(x,x),$ (A) is impossible and 
\begin{equation*}
\inf_{x\in X}\sup_{y\in Y}f(x,y)\leq \sup_{x\in X}f(x,x).
\end{equation*}
\end{theorem}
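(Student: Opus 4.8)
The plan is to encode $f$ into the closed convex-valued map polar to a von Neumann relation and then read off the alternative from the dichotomy ``$\Gamma$ is a KKM map, or it is not''. Concretely, for $x\in X$ put $A(x):=\{y\in Y:f(x,y)>\lambda\}$ and for $y\in Y$ put $\Gamma(y):=\{x\in X:f(x,y)\leq\lambda\}=X\setminus A^{-1}(y)$, so that $\Gamma:Y\longrightarrow 2^{X}$. Since $X$ is convex and $\emptyset\neq Y\subseteq X$, this sits in the frame of Theorem 6 with the two nested sets interchanged. Hypothesis (i) gives at once the structural properties of $\Gamma$: each value $\Gamma(y)$ is a sublevel set of $x\mapsto f(x,y)$, hence closed by lower semicontinuity and convex by quasiconvexity (Definition 13).

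First I would dispose of the case where $\Gamma$ is \emph{not} a KKM map. By Definition 5 this yields points $y_{1},\ldots,y_{n}\in Y$ and a point $\hat{y}\in conv(\{y_{1},\ldots,y_{n}\})$ with $\hat{y}\notin\bigcup_{i=1}^{n}\Gamma(y_{i})$, that is $f(\hat{y},y_{i})>\lambda$ for every $i$. Now hypothesis (ii) is invoked: the superlevel set $\{y\in Y:f(\hat{y},y)>\lambda\}$ is convex by quasiconcavity of $y\mapsto f(\hat{y},y)$ and contains every $y_{i}$, hence contains their convex combination $\hat{y}$; therefore $f(\hat{y},\hat{y})>\lambda$, which is precisely alternative (A) with $x_{0}=\hat{y}$. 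This quasiconcavity/diagonal step is the crux of the proof, and it is exactly the place requiring care: one must ensure the witness $\hat{y}$ lies in $Y$ (so that $f(\hat{y},\hat{y})$ is meaningful and the superlevel set genuinely absorbs $conv(\{y_{1},\ldots,y_{n}\})$), which is where the convexity of the relevant combinations of points of $Y$ is used.

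In the complementary case $\Gamma$ \emph{is} a KKM map, and it remains only to check the hypotheses of Theorem 6 and quote it. Taking $n=1$ in Definition 5 gives $y\in conv(\{y\})\subseteq\Gamma(y)$, so every value is non-empty; together with the closedness and convexity noted above, condition (ii) of Theorem 6 holds. For the compactness clause I would translate the coercivity assumption: ``for every $x\in X\setminus K$ there is $y\in D$ with $f(x,y)>\lambda$'' says that no such $x$ belongs to $\bigcap_{y\in D}\Gamma(y)$, i.e. $\bigcap_{y\in D}\Gamma(y)\subseteq K$; being closed in the compact $K$ it is compact, while $D$ is a convex compact subset of $Y\subseteq X$. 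Thus the compactness clause applies with $X_{0}=D$ and yields $\bigcap_{y\in Y}\Gamma(y)\neq\varnothing$. Any point $\bar{x}$ of this intersection satisfies $f(\bar{x},y)\leq\lambda$ for all $y\in Y$, which is alternative (B). This is the same dichotomy that drives Theorem 9: there the von Neumann hypothesis forbids empty sections of $A$ and so forces $\Gamma$ off the KKM branch, whereas here the KKM branch is permitted and returns (B).

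Finally the minimax consequence is immediate. With $\lambda=\sup_{x}f(x,x)$ one has $f(x,x)\leq\lambda$ for every $x$, so (A) is impossible and (B) must hold; the resulting $\bar{x}$ obeys $\sup_{y\in Y}f(\bar{x},y)\leq\lambda$, whence $\inf_{x\in X}\sup_{y\in Y}f(x,y)\leq\sup_{x\in X}f(x,x)$. I expect the reduction to Theorem 6 and the compactness bookkeeping to be routine; the single genuine obstacle is the KKM verification of the second paragraph, where quasiconcavity in the second variable must be married to the diagonal.
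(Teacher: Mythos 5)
Your proof is correct, but it takes a slightly different route than the paper: the paper disposes of the statement in two lines by quoting Theorem 9, while you go back one level and argue directly from Theorem 6. The paper's proof introduces the same relation $A(x)=\{y\in Y:f(x,y)>\lambda\}$, observes that every hypothesis of Theorem 9 holds except possibly the non-emptiness of the sections, and splits on exactly that: if all $A(x)\neq\emptyset$ then $A$ is a von Neumann relation whose fixed point is the $x_{0}$ of alternative (A); if some $A(\bar{x})=\emptyset$, that is alternative (B) verbatim. You instead split on whether the polar map $\Gamma(y)=X\setminus A^{-1}(y)$ is KKM, and in doing so you re-derive, inside this special case, precisely the two steps that constitute the paper's proof of Theorem 9: the translation of coercivity into $\bigcap_{y\in D}\Gamma(y)\subseteq K$ (hence compactness), and the ``not KKM plus convex sections yields a diagonal point'' argument, which is where quasiconcavity in $y$ enters (your insistence that $\hat{y}\in A(\hat{y})\subseteq Y$, so that $f(\hat{y},\hat{y})$ is defined, is exactly the right point of care, and mirrors the paper's use of convexity of $A(\hat{y})$ at the end of Theorem 9's proof). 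The two arguments thus consume identical ingredients but place the case distinction in opposite positions: the paper reaches (A) through the hard branch (Theorem 9's machinery) and (B) for free, whereas you reach (B) through the hard branch (Theorem 6's compactness clause) and (A) for free. Your route buys independence from Theorem 9 and makes explicit where each hypothesis acts; the paper's buys brevity, since Theorem 9 already packages the whole dichotomy. The only caveat, shared equally by the paper's own reduction, is the degenerate case $D=\emptyset$ (which forces $K=X$ to be compact): there your choice $X_{0}=D$ violates the non-emptiness of $X_{0}$ required in Theorem 6, but any singleton $\{y_{0}\}\subseteq Y$ then serves instead, since $\Gamma(y_{0})$ is closed in the compact set $X$.
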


\begin{proof}
Let $A(x):=\{y\in Y:f(x,y)>\lambda \},x\in X.$ All hypotheses of Theorem 9
are satisfied except, possibly, the non-emptiness of the sections $A(x).$
Thus, either $A(x)\neq \emptyset ,\forall x\in X,$ hence $A$ is a von
Neumann relation, and therefore has a fixed point ((A) holds), or $A(\bar{x}%
)=\emptyset $ for some $\bar{x}\in X,$ i.e., $A$ is not a von Neumann
relation and (B) is satisfied.
\end{proof}

This is a particular instance of the celebrated I\textit{nfsup Inequality} 
\textit{of Ky Fan} with a weaker compactness condition.

Landmark theorems of nonlinear functional analysis follow immediately from
Theorem 14; therefore, indirectly, from the separation of closed convex sets
in finite dimension (Proposition 2). We refer to H. Br\'{e}zis [6] for an
account and applications of the next two fundamental results.

\begin{corollary}
(Mazur-Schauder Theorem) Let $X$ be a non-empty closed convex subset of a
reflexive Banach space $E$ and let $\varphi :X\longrightarrow 
\mathbb{R}
$ be a lower semicontinuous, quasiconvex and coercive (i.e. $%
\lim_{||x||\rightarrow \infty }\varphi (x)=\infty )$ functional. Then $%
\varphi $ achieves its minimum on $X.$
\end{corollary}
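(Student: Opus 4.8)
The plan is to deduce the result directly from Theorem 14 (the \emph{Infsup Inequality}) after passing to the weak topology on $E$. First I would equip $E$ with its weak topology, under which it remains a locally convex Hausdorff t.v.s., so that Theorem 14 applies. The key preparatory observations are purely structural: since $\varphi$ is quasiconvex and norm-lower-semicontinuous, each sublevel set $\{x\in X:\varphi(x)\leq \mu\}$ is convex and norm-closed, hence \emph{weakly} closed (a norm-closed convex set is weakly closed); consequently $\varphi$ is lower semicontinuous for the weak topology. Likewise $X$, being convex and norm-closed, is weakly closed, and by reflexivity (Kakutani's theorem) every closed ball of $E$ is weakly compact.

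Next I would set $Y:=X$, define $f:X\times X\longrightarrow \mathbb{R}$ by $f(x,y):=\varphi(x)-\varphi(y)$, and take $\lambda:=0$. Hypothesis (i) of Theorem 14 holds because, for fixed $y$, the map $x\mapsto \varphi(x)-\varphi(y)$ differs from $\varphi$ by a constant, hence is weakly l.s.c. and quasiconvex. Hypothesis (ii) holds because, for fixed $x$, the map $y\mapsto \varphi(x)-\varphi(y)$ is quasiconcave: its negative $y\mapsto \varphi(y)-\varphi(x)$ has the convex sublevel sets $\{y:\varphi(y)\leq \mu+\varphi(x)\}$, so it is quasiconvex, which is exactly quasiconcavity of $f(x,\cdot)$ in the sense of Definition 13.

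The compactness condition is where coercivity and reflexivity enter. Fix any $x_{*}\in X$ and, using $\lim_{\|x\|\to\infty}\varphi(x)=\infty$, choose $R>0$ with $\varphi(x)>\varphi(x_{*})$ whenever $\|x\|>R$. Put $D:=\{x_{*}\}$, which is convex and compact, and $K:=X\cap\{x:\|x\|\leq R\}$. Then $K$ is a weakly closed subset of a weakly compact ball, hence weakly compact; and for every $x\in X\setminus K$ one has $\|x\|>R$, so $f(x,x_{*})=\varphi(x)-\varphi(x_{*})>0=\lambda$ with $x_{*}\in D$. Thus all hypotheses of Theorem 14 are met for this choice of $f$, $\lambda$, $K$, and $D$.

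Finally, observe that alternative (A) is impossible: it would require some $x_{0}\in X$ with $f(x_{0},x_{0})=\varphi(x_{0})-\varphi(x_{0})=0>0$, a contradiction. Therefore alternative (B) must hold, yielding $\bar{x}\in X$ with $f(\bar{x},y)=\varphi(\bar{x})-\varphi(y)\leq 0$ for all $y\in X$; that is, $\varphi(\bar{x})\leq \varphi(y)$ for all $y\in X$, so $\bar{x}$ minimizes $\varphi$. The only genuine obstacle is the topological bookkeeping at the outset: verifying that quasiconvexity together with norm lower semicontinuity upgrades to weak lower semicontinuity (via weak closedness of closed convex sets), and that reflexivity supplies the weak compactness of $K$. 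Once the weak topology is adopted, the algebraic choice $f(x,y)=\varphi(x)-\varphi(y)$ with $\lambda=0$ renders (A) vacuous and makes the conclusion immediate.
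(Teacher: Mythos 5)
Your proposal is correct and takes essentially the same approach as the paper: apply Theorem 14 in the weak topology with $Y=X$, $f(x,y)=\varphi(x)-\varphi(y)$, $\lambda=0$, and use coercivity plus reflexivity to produce the weakly compact set $K$, so that alternative (A) is impossible and (B) yields the minimizer. The only cosmetic differences are your choice of the singleton $D=\{x_{*}\}$ (the paper takes $D=K$) and your explicit verification that quasiconvexity together with norm lower semicontinuity upgrades to weak lower semicontinuity, a step the paper leaves as ``readily verified.''
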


\begin{proof}
Let $\lambda =0,$ $Y=X,$ and $f(x,y)=\varphi (x)-\varphi (y)$ in Theorem 14.
Let $K$ be the intersection of $X$ with a closed ball with radius $M>0$
centered at the origin of $E$ and such that if $x\in X$ with $\Vert x\Vert
>M $ then $\varphi (x)>\varphi (y)$ for some $y\in K.$ Such a non-empty set $%
K$ exists due to the coercivity of $\varphi .$ Since $E$ is reflexive, $K$
is weakly compact. One readily verifies that the hypotheses of Theorem 14
with $X,Y,f,K,D=K,$ and $\lambda =0$ all hold: $f$ is l.s.c. and quasiconvex
in $x, $ and quasiconcave in $y.$ Clearly, possibility (A) of Theorem 14
cannot hold. Hence (B) is true: there exists $\bar{x}\in X$\ such that $f(%
\bar{x},y)=\varphi (\bar{x})-\varphi (y)\leq 0,$\ for all $y\in X.$

We now derive from the nonlinear alternative in Theorem 14 the celebrated
theorem of Stampacchia for variational inequalities. Recall that given a
normed space $E,$ a form $a:E\times E\longrightarrow 
\mathbb{R}
$ is said to be (i) \textit{bilinear} if it linear in each of its arguments;
(ii) \textit{continuous} if there exists a constant $C>0$ with $|a(x,y)|\leq
C||x||||y||$ for all $x,y\in E;$ and (iii) \textit{coercive} if there exists
a constant $\alpha >0$ with $a(x,x)\geq \alpha ||x||^{2}$ for all $x\in E.$
\end{proof}

\begin{corollary}
(Stampacchia Theorem) Let $E$ be a reflexive Banach space, $a:E\times
E\longrightarrow 
\mathbb{R}
$ be a continuous and coercive bilinear form, and let $\ell
:E\longrightarrow 
\mathbb{R}
$ be a bounded linear functional. Given a non-empty closed and convex subset 
$X$ in $E,$ there exists a unique $\bar{x}\in X$\ such that $a(\bar{x},\bar{x%
}-y)\leq \ell (\bar{x})-\ell (y)$ \ for all $y\in X.$
\end{corollary}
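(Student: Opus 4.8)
The plan is to obtain existence from the nonlinear alternative of Theorem~14 and to recover uniqueness separately from coercivity, mirroring the treatment of the Mazur--Schauder corollary. Concretely, I would set $Y=X$, $\lambda=0$, and define $f:X\times X\longrightarrow\mathbb{R}$ by
\[
f(x,y):=a(x,x-y)-\ell(x)+\ell(y)=a(x,x)-a(x,y)-\ell(x)+\ell(y).
\]
With this choice the conclusion of alternative (B) reads $f(\bar{x},y)\le 0$ for all $y\in X$, i.e. $a(\bar{x},\bar{x}-y)\le\ell(\bar{x})-\ell(y)$ for all $y\in X$, which is exactly the asserted variational inequality; so the whole existence part reduces to checking the hypotheses of Theorem~14 and ruling out (A).

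For hypothesis (i), fix $y\in X$. Coercivity gives $a(x,x)\ge\alpha\|x\|^{2}\ge0$ for every $x$, so $x\mapsto a(x,x)$ is a nonnegative quadratic form (it agrees with its symmetrization on the diagonal); nonnegativity of a quadratic form is equivalent to positive semidefiniteness of its symmetric part, hence $x\mapsto a(x,x)$ is convex. The remaining terms $-a(x,y)-\ell(x)+\ell(y)$ are affine in $x$, so $f(\cdot,y)$ is convex and a fortiori quasiconvex; being moreover norm-continuous it is weakly lower semicontinuous, which is the relevant semicontinuity once $E$ is endowed with its weak topology (the topology in which reflexivity supplies compactness). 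Hypothesis (ii) is immediate, since $y\mapsto f(x,y)=-a(x,y)+\ell(y)+\text{const}$ is affine, hence quasiconcave.

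For the compactness condition, fix an arbitrary $y_{0}\in X$. Coercivity, continuity of $a$, and boundedness of $\ell$ give
\[
f(x,y_{0})\ge\alpha\|x\|^{2}-\bigl(C\|y_{0}\|+\|\ell\|\bigr)\|x\|+\ell(y_{0})\longrightarrow+\infty\quad\text{as }\|x\|\to\infty,
\]
so there is $M>0$ with $f(x,y_{0})>0$ whenever $\|x\|>M$. Taking $K:=X\cap\overline{B}(0,M)$---convex, bounded and norm-closed, hence weakly compact by reflexivity---and $D:=\{y_{0}\}$---a convex compact singleton---verifies the required condition, since $X\setminus K=\{x\in X:\|x\|>M\}$. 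Finally $f(x,x)=a(x,0)=0$ for every $x$, so (A) is impossible; Theorem~14 then forces (B), producing the desired $\bar{x}$.

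Uniqueness is the one point Theorem~14 cannot deliver, and here I would invoke coercivity directly: if $\bar{x}_{1}$ and $\bar{x}_{2}$ both satisfy the inequality, testing the first at $y=\bar{x}_{2}$ and the second at $y=\bar{x}_{1}$ and adding collapses the $\ell$-terms and leaves $a(\bar{x}_{1}-\bar{x}_{2},\bar{x}_{1}-\bar{x}_{2})\le0$, whence $\alpha\|\bar{x}_{1}-\bar{x}_{2}\|^{2}\le0$ and $\bar{x}_{1}=\bar{x}_{2}$. The step I expect to be the main obstacle is the careful justification of hypothesis (i): one must argue that coercivity alone makes the quadratic term convex and that convexity together with norm-continuity upgrades to \emph{weak} lower semicontinuity, so that reflexivity can be used to compactify $K$ in precisely the topology in which Theorem~14 is applied.
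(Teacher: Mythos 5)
Your proposal is correct and follows essentially the same route as the paper: the same $f(x,y)=a(x,x-y)-\ell(x-y)$ with $\lambda=0$, a singleton $D=\{y_{0}\}$, the set $K=X\cap\overline{B}(0,M)$ made weakly compact via reflexivity, alternative (A) ruled out by $f(x,x)=0$, and the identical add-the-two-inequalities coercivity argument for uniqueness. The only difference is cosmetic and in your favor: where the paper merely asserts weak lower semicontinuity and quasiconvexity in $x$, you actually justify it (convexity of the coercive quadratic form plus the fact that norm-closed convex level sets are weakly closed), and you take $M$ implicitly from the growth of $f(\cdot,y_{0})$ rather than computing it by the quadratic formula.
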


\begin{proof}
For the existence, we apply Theorem 14 to $f:X\times X\longrightarrow 
\mathbb{R}
$ defined by $f(x,y):=a(x,x-y)-\ell (x-y),(x,y)\in X\times X,\lambda =0,$ $%
D=\{y_{0}\}$ with $0\neq y_{0}\in X$ arbitrary, and $K:=\{x\in X:\Vert
x\Vert \leq M\}$ where 
\begin{equation*}
M:=\frac{1}{2}(\beta +\sqrt{\beta ^{2}+4\gamma }),
\end{equation*}%
$\beta =(C||y_{0}||+||\ell ||)/\alpha $ and $\gamma =||\ell
||||y_{0}||/\alpha .$

Indeed, first note that if $E$ is equipped with the weak topology, then $%
f(x,y)$ is $l.s.c.$ and quasiconvex in $x$ and quasiconcave in $y$ (it is in
fact linear and continuous for the norm topology in both arguments). Since $%
X $ is closed and convex, it follows that $K$ is a closed, convex and
bounded, hence weakly compact, subset of $X.$ $D$ is obviously a weakly
compact subset of $X.$ Note now that if $f(x,y_{0})\leq 0$ for any given $%
x\in X,$ i.e., $a(x,x)\leq a(x,y_{0})+\ell (x-y_{0}),$ then $\Vert x\Vert $
satisfies a quadratic inequality and is bounded above by $M$:%
\begin{equation*}
\left. 
\begin{array}{c}
\\ 
\Longrightarrow \\ 
\Longleftrightarrow \\ 
\Longrightarrow%
\end{array}%
\right. \left. 
\begin{array}{l}
\alpha ||x||^{2}\leq a(x,x)\leq C\Vert x\Vert \Vert y_{0}\Vert +\Vert \ell
\Vert \Vert x\Vert +\Vert \ell \Vert \Vert y_{0}\Vert \\ 
\alpha ||x||^{2}-(C\Vert y_{0}\Vert +\Vert \ell \Vert )\Vert x\Vert -\Vert
\ell \Vert \Vert y_{0}\Vert \leq 0 \\ 
||x||^{2}-\beta \Vert x\Vert -\gamma \leq 0 \\ 
\Vert x\Vert \leq \frac{1}{2}(\beta +\sqrt{\beta ^{2}+4\gamma })=M.%
\end{array}%
\right.
\end{equation*}%
Consequently, if $x\in X,\Vert x\Vert >M,$ then $f(x,y_{0})>0$ and the
compactness condition in Theorem 14 is satisfied. Since $f(x,x)=0$ for any $%
x\in X,$ (A) of Theorem 14 is impossible, and (B)\ holds, i.e., $f(\bar{x}%
,y)=a(\bar{x},\bar{x}-y)-\ell (\bar{x})+\ell (y)\leq 0$ for some $\bar{x}\in
X$ and all $y\in X$ and the proof of the existence is complete.

The uniqueness follows at once from the bilinearity and the coercivity of
the form $a$ as follows: if $a(\bar{x}_{i},\bar{x}_{i}-y)-\ell (\bar{x}%
_{i})+\ell (y)\leq 0$ for two elements $\bar{x}_{i}\in X,i=1,2,$ and all $%
y\in X,$ then adding $a(\bar{x}_{1},\bar{x}_{1}-\bar{x}_{2})\leq \ell (\bar{x%
}_{1})-\ell (\bar{x}_{2})$ to $a(\bar{x}_{2},\bar{x}_{2}-\bar{x}_{1})\leq
\ell (\bar{x}_{2})-\ell (\bar{x}_{1})$ gives $0\leq \alpha \Vert \bar{x}_{1}-%
\bar{x}_{2}\Vert ^{2}\leq a(\bar{x}_{1}-\bar{x}_{2},\bar{x}_{1}-\bar{x}%
_{2})\leq 0,$ i.e., $\bar{x}_{1}=\bar{x}_{2}.$
\end{proof}

The coincidence $(\mathcal{N},\mathcal{N}^{-1})$ (Theorem 12) can be
expressed in analytical terms as a second alternative for nonlinear systems
of inequalities as follows:

\begin{theorem}
\textit{Let }$X$\textit{\ and }$Y$\textit{\ be two convex subsets of
topological vector spaces and let }$f,g:X\times Y\longrightarrow 
\mathbb{R}
$\textit{\ be two functions satisfying:}

\textit{(i) }$f(x,y)\leq g(x,y)$\textit{\ for all }$(x,y)\in X\times Y;$

\textit{(ii) }$x\mapsto f(x,y)$\textit{\ is quasiconcave on }$X,$\textit{\
for each fixed }$y\in Y;$

\textit{(iii) }$y\mapsto f(x,y)$\textit{\ is l.s.c. and quasiconvex on }$Y,$%
\textit{\ for each fixed }$x\in X;$

\textit{(iv) }$x\mapsto g(x,y)$\textit{\ is u.s.c. and quasiconcave on }$X,$%
\textit{\ for each fixed }$x\in X;$

\textit{(v) }$y\mapsto g(x,y)$\textit{\ is quasiconvex on }$Y,$\textit{\ for
each fixed }$x\in X.$

(vi) \textit{Given }$\lambda \in 
\mathbb{R}
$ arbitrary, assume that \textit{either }$Y$\textit{\ is compact, or }$X$ is
compact\textit{, or there exist a compact subset }$K$ of $X$ and a convex
compact subset $C$ of $Y$ such that for any $x\in X\setminus K$ there exists 
$y\in C$ with $g(x,y)<\lambda .$

\textit{Then one of the following statements holds:}

(\textit{A) there exists }$\bar{x}\in X$\textit{\ such that }$g(\bar{x}%
,y)\geq \lambda ,$\textit{\ for all }$y\in Y;$ \textit{or}

(\textit{B) there exists }$\bar{y}\in Y$\textit{\ such that }$f(x,\bar{y}%
)\leq \lambda ,$\textit{\ for all }$x\in X.$
\end{theorem}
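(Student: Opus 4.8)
The plan is to derive this alternative from the coincidence theorem (Theorem 12) in the same spirit as the derivation of Theorem 14 from Theorem 9: I would attach to $f$ and $g$ a pair of relations and argue that if \emph{both} alternatives (A) and (B) fail, then these relations become genuine von Neumann relations, so Theorem 12 produces a coincidence that contradicts hypothesis (i). Fixing the given $\lambda$, define
\[
A := \{(x,y)\in X\times Y : g(x,y)<\lambda\}, \qquad B := \{(x,y)\in X\times Y : f(x,y)>\lambda\}.
\]
Their sections are $A(x)=\{y\in Y:g(x,y)<\lambda\}$, $A^{-1}(y)=\{x\in X:g(x,y)<\lambda\}$, $B(x)=\{y\in Y:f(x,y)>\lambda\}$, $B^{-1}(y)=\{x\in X:f(x,y)>\lambda\}$. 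The key observation is that the negation of (A) says exactly that every section $A(x)$ is non-empty, while the negation of (B) says exactly that every section $B^{-1}(y)$ is non-empty.

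Next I would verify the structural requirements of Definition 8, matching each hypothesis to the correct piece. For $A\in\mathcal{N}(X,Y)$: convexity of $A(x)$ comes from quasiconvexity of $y\mapsto g(x,y)$ in (v) (a strict sub-level set of a quasiconvex function is convex, since $g(x,y_1),g(x,y_2)<\lambda$ forces $g$ on the segment below $\max$); openness of $A^{-1}(y)$ comes from upper semicontinuity of $x\mapsto g(x,y)$ in (iv); and convexity of $X\setminus A^{-1}(y)=\{x:g(x,y)\geq\lambda\}$ comes from quasiconcavity of $x\mapsto g(x,y)$ in (iv). For $B\in\mathcal{N}^{-1}(X,Y)$, i.e. $B^{-1}\in\mathcal{N}(Y,X)$: convexity of $B^{-1}(y)$ comes from quasiconcavity of $x\mapsto f(x,y)$ in (ii); openness of $B(x)$ in $Y$ comes from lower semicontinuity of $y\mapsto f(x,y)$ in (iii); and convexity of $Y\setminus B(x)=\{y:f(x,y)\leq\lambda\}$ comes from quasiconvexity of $y\mapsto f(x,y)$ in (iii). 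Finally, since $A(x)=\{y:g(x,y)<\lambda\}$, the compactness condition (vi) is precisely condition (iii) of Theorem 12 for $A$, with the two special cases ``$Y$ compact'' and ``$X$ compact'' handled exactly as in the proof of Theorem 12.

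With this dictionary in hand the argument closes by dichotomy. If some section $A(\bar{x})$ is empty, then $g(\bar{x},y)\geq\lambda$ for all $y\in Y$ and (A) holds; if some section $B^{-1}(\bar{y})$ is empty, then $f(x,\bar{y})\leq\lambda$ for all $x\in X$ and (B) holds. Otherwise all sections are non-empty, so $A\in\mathcal{N}(X,Y)$ and $B\in\mathcal{N}^{-1}(X,Y)$ are bona fide von Neumann relations satisfying the compactness hypothesis, and Theorem 12 yields a coincidence $(x_0,y_0)\in A\cap B$. Then $g(x_0,y_0)<\lambda$ and $f(x_0,y_0)>\lambda$, so $\lambda<f(x_0,y_0)\leq g(x_0,y_0)<\lambda$ by (i), a contradiction. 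Hence the last case is impossible and (A) or (B) must hold.

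The main obstacle I anticipate is bookkeeping rather than conceptual: keeping straight which of the hypotheses (ii)--(v) feeds which clause of Definition 8, and in particular correctly assigning the $g$-relation to $\mathcal{N}(X,Y)$ and the $f$-relation to $\mathcal{N}^{-1}(X,Y)$. This asymmetry is forced by the fact that $A$ must have convex $Y$-sections while $B$ must have convex $X$-sections, with the complementary sections convex in the opposite variable. The only subtlety requiring care is checking that the strictly defined sections are still convex under \emph{mere} quasiconvexity/quasiconcavity, and that openness is obtained by applying lower/upper semicontinuity in the correct variable.
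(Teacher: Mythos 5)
Your proposal is correct and takes essentially the same route as the paper's own proof: the paper likewise applies Theorem 12 to $A=\{(x,y):g(x,y)<\lambda\}$ and $B=\{(x,y):f(x,y)>\lambda\}$, observes that hypothesis (i) makes a coincidence impossible, and concludes that the only hypotheses of Theorem 12 that can fail are the non-emptiness of the sections $A(x)$ and $B^{-1}(y)$, which is exactly the alternative (A) or (B). Your version simply makes explicit the verification of Definition 8 and the matching of the compactness conditions, which the paper leaves to the reader.
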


\begin{proof}
Simply apply Theorem 12 to $A,B\subset X\times Y$ defined as:%
\begin{equation*}
A:=\{(x,y):g(x,y)<\lambda \}\text{ and }B:=\{(x,y):f(x,y)>\lambda \}\text{.}
\end{equation*}

Note that in view of (i) a coincidence between $A$ and $B$ is impossible as
it yields $\lambda <\lambda .$ Since all hypotheses of Theorem 12 are
satisfied save for $A(x)\neq \emptyset $ for all $x\in X$ and $B^{-1}(y)\neq
\emptyset $ for all $y\in Y,$ it follows that either $A(\bar{x})=\emptyset $
for some $\bar{x}\in X$ (thesis (A)) or $B^{-1}(\bar{y})=\emptyset $ for
some $\bar{y}\in Y$ (thesis (B)).
\end{proof}

\begin{remark}
Theorem 17 implies $\alpha =\sup_{X}\inf_{Y}g(x,y)\geq
\inf_{Y}\sup_{X}f(x,y)=\beta .$

Indeed, assuming that $\alpha <\beta <\infty ,$ let $\lambda $ be an
arbitrary but fixed real number strictly between $\alpha $ and $\beta .$ By
Theorem 17, either there exists $\bar{y}\in Y$\ such that $f(x,\bar{y})\leq
\lambda ,$\ for all $x\in X$ thus $\beta \leq \lambda <\beta $ which is
impossible, or there exists $\bar{x}\in X$\ such that $g(\bar{x},y)\geq
\lambda ,$\ for all $y\in Y$ thus $\alpha \geq \lambda >\alpha $ which is
absurd. Hence $\alpha \geq \beta .$
\end{remark}

Maurice Sion's formulation of the von Neumann Minimax Theorem follows
immediately with $f=g$ (see [1]):

\begin{corollary}
(Sion-von Neumann Minimax Theorem) Let $X$ and $Y$ be convex subsets of
topological vector spaces and let $f$ be a real function on $X\times Y$ such
that:

(i) $x\mapsto f(x,y)$ is quasiconcave and u.s.c. on $X\,$\ for each fixed $%
y\in Y;$

(ii) $y\mapsto f(x,y)$ is quasiconvex and l.s.c. on $Y\,$\ for each fixed $%
x\in X;$

Assume that either $X$ is compact or $Y$ is compact. Then:%
\begin{equation*}
\alpha =\sup_{X}\inf_{Y}f(x,y)=\inf_{Y}\sup_{X}f(x,y)=\beta
\end{equation*}
\end{corollary}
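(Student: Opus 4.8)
The plan is to derive the minimax equality by splitting it into the two opposing inequalities between $\alpha=\sup_X\inf_Y f$ and $\beta=\inf_Y\sup_X f$, recognizing that one of them is free of charge and the other is precisely the content of Theorem 17 specialized to the diagonal case $g=f$. Since the substantive analytic work has already been done upstream, the corollary should follow almost mechanically, as the remark preceding its statement already signals.

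First I would dispose of the elementary inequality $\alpha\le\beta$, which holds for \emph{any} real function $f$ on $X\times Y$ with no convexity, semicontinuity, or compactness assumptions whatsoever. This follows from the trivial chain $\inf_{y'}f(x,y')\le f(x,y)\le\sup_{x'}f(x',y)$, valid for every $x\in X$ and $y\in Y$; taking the supremum over $x$ on the left-hand member and then the infimum over $y$ on the right-hand member yields $\sup_X\inf_Y f\le\inf_Y\sup_X f$, i.e. $\alpha\le\beta$.

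Next, to obtain the reverse inequality $\alpha\ge\beta$, I would invoke Theorem 17, together with the consequence extracted in Remark 18, with the single function $f$ playing both roles, that is, setting $g:=f$. The bookkeeping here is a routine check that hypotheses (i)--(vi) of Theorem 17 collapse exactly onto the hypotheses of the corollary under $g=f$: condition (i), $f\le g$, becomes a trivial equality; the quasiconcavity of $x\mapsto f(x,y)$ in the corollary supplies (ii) as well as the quasiconcavity half of (iv), while its upper semicontinuity in $x$ supplies the remaining half of (iv); the quasiconvexity and lower semicontinuity of $y\mapsto f(x,y)$ supply (iii) together with the quasiconvexity half of (v); and the compactness alternative of the corollary ($X$ compact or $Y$ compact) is precisely the first two branches of (vi). With the hypotheses thus matched, Remark 18 delivers $\alpha=\sup_X\inf_Y f\ge\inf_Y\sup_X f=\beta$. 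The one point to treat with a little care is the separating level used in Remark 18: under the contradiction hypothesis $\alpha<\beta$ one must be able to fix a finite $\lambda$ strictly between $\alpha$ and $\beta$, which is always possible (choosing $\lambda$ just above $\alpha$), so the argument is not genuinely obstructed by possibly infinite values of $\alpha$ or $\beta$.

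Finally, combining $\alpha\le\beta$ with $\alpha\ge\beta$ forces the asserted equality $\alpha=\beta$. I expect no genuine obstacle in this argument: the only nontrivial direction is $\alpha\ge\beta$, and that is carried entirely by Theorem 17, which has already absorbed the separation and intersection machinery. Everything else is the essentially free maximin--minimax inequality plus the verification that the continuity and convexity labels line up across the two functional slots when $f$ and $g$ are identified.
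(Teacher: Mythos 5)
Your proof is correct and takes exactly the paper's route: the proof given there reads, in full, ``The inequality $\alpha \leq \beta$ is always true and $\alpha \geq \beta$ follows from Remark 18,'' which is precisely your decomposition into the free maximin--minimax inequality plus Theorem 17 with $g=f$. Your additional care about choosing a finite $\lambda$ just above $\alpha$ when $\beta$ might be infinite is a small refinement of Remark 18 (which simply assumes $\alpha<\beta<\infty$), but it does not alter the substance of the argument.
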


\begin{proof}
The inequality $\alpha \leq \beta $ is always true and $\alpha \geq \beta $
follows from Remark 18.
\end{proof}

\begin{remark}
If both $X$ and $Y$ are compact, the infsup equality in Corollary 19 is a
minmax equality and is equivalent to the existence of a saddle point $%
(x_{0},y_{0})$ for the function $f(x,y),$ i.e., $f(x,y_{0})\leq
f(x_{0},y),\forall (x,y)\in X\times Y.$
\end{remark}

We end this section with a short proof of the Markov-Kakutani fixed point
theorem for abelian families of continuous affine mappings in linear
topological spaces having separating duals\footnote{%
A t.v.s. $E$ \textit{has separating dual} if for each $x\in E,x\neq 0,$
there exists a bounded linear form $\ell \in E^{\prime },$ the topological
dual of $E,$ such that $\ell (x)\neq 0.$ Locally convex topological vector
spaces have separating duals. Sequence spaces $\ell ^{p},0<p<1,$ and Hardy
spaces $H^{p},0<p<1,$ are instances of non-locally convex spaces with
separating duals.}.

Recall that a mapping $\phi $ from a convex set $X$ into a vector space is
said to be \textit{affine} if and only if $\phi (\sum_{i=1}^{n}\lambda
_{i}x_{i})=\sum_{i=1}^{n}\lambda _{i}\phi (x_{i})$ for any convex
combination $\sum_{i=1}^{n}\lambda _{i}x_{i},\lambda _{i}\geq
0,\sum_{i=1}^{n}\lambda _{i}=1,$ in $X.$ The key ingredient is the following
fixed point property for continuous affine transformations of a compact
convex set.

\begin{corollary}
Let $X$ be a non-empty compact convex subset of a t.v.s. $E$ with separating
dual $E^{\prime }$ and let $\phi :X\longrightarrow X$ be a continuous affine
mapping. Then $\phi $ has a fixed point.
\end{corollary}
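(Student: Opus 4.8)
The plan is to avoid any fixed point theorem of topological essence and instead to exploit the affineness of $\phi$ through a Ces\`aro averaging argument, reducing the existence of a fixed point to the finite intersection property of a decreasing family of compact convex sets. This is the classical Markov--Kakutani device applied to the single map $\phi$, and the hypothesis that $E'$ separates the points of $E$ will enter at precisely one place: the very last step.

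First I would introduce, for each integer $n\ge 1$, the averaged map $\phi_n:=\frac{1}{n}\sum_{k=0}^{n-1}\phi^k$, where $\phi^0=\mathrm{id}$. Since $\phi$ is continuous and affine with $\phi(X)\subseteq X$, each iterate $\phi^k$ is continuous, affine, and sends $X$ into $X$; as $\phi_n(x)$ is a convex combination of the points $\phi^k(x)\in X$ and $X$ is convex, $\phi_n$ is again a continuous affine self-map of $X$. I would then set $K_n:=\phi_n(X)$, which is compact, being the continuous image of the compact set $X$, and convex, being the affine image of the convex set $X$.

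The next step is to verify that the family $\{K_n\}_{n\ge 1}$ has the finite intersection property. The key observation is that the maps $\phi_n$ are polynomials in $\phi$ and hence pairwise commute, while $\phi_m(X)\subseteq X$. Thus for any finite index set $n_1,\ldots,n_m$ and any fixed $i$, commutativity lets me write $\phi_{n_1}\circ\cdots\circ\phi_{n_m}=\phi_{n_i}\circ\bigl(\textstyle\prod_{j\neq i}\phi_{n_j}\bigr)$, so the image of $X$ under the composite lies in $\phi_{n_i}(X)=K_{n_i}$; since this holds for every $i$, and the composite sends the nonempty set $X$ into $X$, its image is a nonempty subset of $\bigcap_i K_{n_i}$. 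Compactness of $X$ then upgrades this finite intersection property to a point $\bar x\in\bigcap_{n\ge 1}K_n$.

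Finally I would show $\bar x$ is fixed. For each $n$ choose $x_n\in X$ with $\bar x=\phi_n(x_n)=\frac1n\sum_{k=0}^{n-1}\phi^k(x_n)$; applying the affine map $\phi$ to this convex combination and telescoping gives $\phi(\bar x)-\bar x=\frac1n\bigl(\phi^n(x_n)-x_n\bigr)$. For any $\ell\in E'$, both $\phi^n(x_n)$ and $x_n$ lie in $X$, so $\ell$ takes its values in the bounded set $\ell(X)$, whence $|\ell(\phi(\bar x)-\bar x)|\le \mathrm{diam}\,\ell(X)/n$; letting $n\to\infty$ forces $\ell(\phi(\bar x)-\bar x)=0$ for every $\ell\in E'$, and since $E'$ separates points this yields $\phi(\bar x)=\bar x$. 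I expect the only delicate points to be the commutation argument underpinning the finite intersection property and the careful use of affineness in the telescoping identity; the separating-dual hypothesis is not an obstacle but exactly the ingredient that replaces the Hahn--Banach separation unavailable in a general, possibly non-locally-convex, topological vector space.
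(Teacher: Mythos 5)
Your proof is correct, but it takes a genuinely different route from the paper. You use the classical Markov averaging device: the Ces\`aro means $\phi_n=\frac{1}{n}\sum_{k=0}^{n-1}\phi^k$ are again continuous affine self-maps of $X$ (affineness is exactly what lets $\phi^k$ pass through convex combinations, both in the commutation identity $\phi_n\circ\phi_m=\frac{1}{nm}\sum_{k=0}^{n-1}\sum_{j=0}^{m-1}\phi^{k+j}=\phi_m\circ\phi_n$ and in the telescoping step), the compact sets $K_n=\phi_n(X)$ have the finite intersection property, and any $\bar x\in\bigcap_{n\geq 1}K_n$ satisfies $\phi(\bar x)-\bar x=\frac{1}{n}\left(\phi^n(x_n)-x_n\right)$, so every $\ell\in E^{\prime}$ annihilates $\phi(\bar x)-\bar x$ (since $\ell(X)$ is a compact, hence bounded, subset of $\mathbb{R}$) and the separating dual finishes. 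All the delicate points you flag do check out; note also that the $K_n$ are closed in the compact Hausdorff space $X$, which is what upgrades the finite intersection property to a common point. The paper argues quite differently, staying inside its own chain of deductions: it sets $f(x,\ell)=\ell(\phi(x)-x)$ on $X\times E^{\prime}$, observes that the sets $A(\ell)=\{x\in X:f(x,\ell)\leq 0\}$ are compact, and proves their finite intersection property by applying the saddle-point form of the Sion--von Neumann minimax theorem (Remark 20, itself derived from the coincidence Theorem 12, hence ultimately from Klee's intersection theorem and convex separation) to the restriction of $f$ to $X\times conv(\{\ell_1,\ldots,\ell_n\})$. What your route buys is elementarity and self-containment: nothing is needed beyond compactness, continuity of the vector operations, and the separating dual, and your argument extends essentially verbatim to an abelian family, giving Corollary 22 without the induction the paper uses. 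What it gives up, in the context of this paper, is the expository point: Corollary 21 is presented there precisely as an application of the convex KKM/minimax machinery, one link in the chain of equivalences between separation, convex KKM, minimax, and Markov--Kakutani, and your proof bypasses that machinery entirely.
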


\begin{proof}
The proof is a simplification of the treatment in [9]. Define $f:X\times
E^{\prime }\longrightarrow 
\mathbb{R}
$ by $f(x,\ell )=\ell (\phi (x)-x),(x,\ell )\in X\times E^{\prime }.$ It
suffices to prove the existence of $x_{0}\in X$ such that $f(x_{0},\ell
)\leq 0,\forall \ell \in E^{\prime },$ for this would imply $\ell (\phi
(x_{0})-x_{0})=0,\forall \ell \in E^{\prime },$ i.e., $\phi (x_{0})-x_{0}=0$
and the proof is complete.

This amounts to showing that $\bigcap_{\ell \in E^{\prime }}A(\ell )\neq
\emptyset $ for the relation $A:=\{(\ell ,x)\in E^{\prime }\times X:f(x,\ell
)\leq 0\}.$

Since for each fixed $\ell \in E^{\prime },$ the function $f(x,\ell )$ is
l.s.c. in $x,$ then each $A(\ell )$ is a closed, hence compact, subset of $%
X. $ It suffices, therefore, to show that the collection $\{A(\ell ):\ell
\in E^{\prime }\}$ has the finite intersection property. Consider, to this
aim, a finite collection of bounded linear functionals $L:=\{\ell
_{1},\ldots ,\ell _{n}\}\subset E^{\prime },$ and let $Y=conv(L),$ a convex
compact subset of $E^{\prime }.$ The restriction of $f(x,\ell )$ to $X\times
Y$ is obviously u.s.c. and quasiconcave in $x$ and l.s.c. and quasiconvex in 
$\ell .$ Since both $X$ and $Y$ are compact and convex, it follows from
Remark 20 that there exists $(x_{0},\ell _{0})\in X\times Y$ with $%
f(x_{0},\ell )\leq f(x,\ell _{0})$ for all $(x,\ell )\in X\times Y,$ i.e., $%
\ell (\phi (x_{0})-x_{0})\leq \ell _{0}(\phi (x)-x)$ for all $(x,\ell )\in
X\times Y.$ Let $\hat{x}$ be such that $\ell _{0}(\hat{x})=\max_{x\in X}\ell
_{0}(x).$ Since $\phi (\hat{x})\in X,$ it follows that $\ell _{0}(\phi (\hat{%
x})-\hat{x})\leq 0$ and, consequently, $\ell (\phi (x_{0})-x_{0})\leq 0,$
for all $\ell \in Y,$ in particular, $\ell _{i}(\phi
(x_{0})-x_{0})=f(x_{0},\ell _{i})\leq 0$ for all $\ell _{i}\in L,$ and the
proof is complete.
\end{proof}

The Markov-Kakutani follows by a standard compactness argument. Recall that
a family $\mathcal{\tciFourier }=\{\phi \}$ of mappings is said to be 
\textit{abelian} if $\phi _{1}\phi _{2}=\phi _{2}\phi _{1}$ for all $\phi
_{1},\phi _{2}\in \tciFourier .$

\begin{corollary}
(Theorem of Markov-Kakutani) Let $X$ be a non-empty compact convex subset of
a t.v.s. $E$ with separating dual $E^{\prime }$ and let $\tciFourier $ be an
abelian family of continuous affine transformations from $X$ into itself$.$
Then, there exists $x_{0}\in X$ such that $\phi (x_{0})=x_{0}$ for every $%
\phi \in \tciFourier .$
\end{corollary}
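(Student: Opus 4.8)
The plan is to deduce the Markov-Kakutani theorem from Corollary 21, which already provides a common fixed point for a single continuous affine self-map of $X$. The strategy is the classical finite-intersection argument: for each $\phi \in \tciFourier$, let $F_{\phi} := \{x \in X : \phi(x) = x\}$ denote its fixed point set. Corollary 21 guarantees that each $F_{\phi}$ is non-empty. The goal reduces to showing that the family $\{F_{\phi}\}_{\phi \in \tciFourier}$ has a common point, and since $X$ is compact this follows once we establish that each $F_{\phi}$ is closed and that the family has the finite intersection property.

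First I would check that each $F_{\phi}$ is closed: since $\phi$ is continuous, the map $x \mapsto \phi(x) - x$ is continuous, and $F_{\phi}$ is the preimage of $\{0\}$ under this map (invoking the separating dual exactly as in the proof of Corollary 21 to detect $\phi(x)-x = 0$). As a closed subset of the compact set $X$, each $F_{\phi}$ is itself compact. Next I would verify that each $F_{\phi}$ is convex: if $\phi(x)=x$ and $\phi(y)=y$, then affineness gives $\phi(tx+(1-t)y) = t\phi(x)+(1-t)\phi(y) = tx+(1-t)y$, so $F_{\phi}$ is convex and hence, being non-empty, is itself a non-empty compact convex subset of $X$.

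The main step is the finite intersection property, and this is where the abelian and affine hypotheses do the real work. Given $\phi_1, \ldots, \phi_n \in \tciFourier$, I would argue by induction on $n$ that $F_{\phi_1} \cap \cdots \cap F_{\phi_n} \neq \emptyset$. The base case $n=1$ is Corollary 21. For the inductive step, set $K := F_{\phi_1} \cap \cdots \cap F_{\phi_{n-1}}$, which by induction is a non-empty compact convex set. The crucial observation is that $K$ is invariant under $\phi_n$: if $x \in K$, then for each $i < n$ we have $\phi_i(\phi_n(x)) = \phi_n(\phi_i(x)) = \phi_n(x)$ using commutativity and $\phi_i(x)=x$, so $\phi_n(x) \in K$. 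Thus $\phi_n$ restricts to a continuous affine self-map of the non-empty compact convex set $K$, and Corollary 21 applied to $\phi_n|_K$ yields a fixed point in $K$, which is precisely a point of $F_{\phi_1}\cap\cdots\cap F_{\phi_n}$.

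The main obstacle, such as it is, lies entirely in verifying the $\phi_n$-invariance of $K$ cleanly, since that is the only place the commutativity hypothesis enters and it must interlock correctly with the affine structure used by Corollary 21 (one needs $K$ to remain compact convex so the corollary applies to the restriction). Once the finite intersection property is in hand, the conclusion is immediate: the family $\{F_{\phi}\}_{\phi \in \tciFourier}$ consists of closed subsets of the compact space $X$ with the finite intersection property, so $\bigcap_{\phi \in \tciFourier} F_{\phi} \neq \emptyset$, and any $x_0$ in this intersection satisfies $\phi(x_0) = x_0$ for every $\phi \in \tciFourier$, completing the proof.
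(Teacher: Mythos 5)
Your proposal is correct and follows essentially the same route as the paper's own proof: fixed point sets $Fix(\phi)$ are non-empty (Corollary 21), convex (affineness), and compact; the finite intersection property is established by induction, using commutativity to show $\phi_n$ leaves $\bigcap_{i=1}^{n-1}Fix(\phi_i)$ invariant so that Corollary 21 applies to the restriction; compactness of $X$ then yields a common fixed point. The only cosmetic difference is that you invoke the separating dual to get closedness of $F_\phi$, whereas Hausdorffness of $E$ already suffices (the fixed point set of a continuous self-map of a Hausdorff space is closed).
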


\begin{proof}
For any given $\phi \in \tciFourier ,$ let $Fix(\phi )$ be the set of its
fixed points. We show that $\bigcap_{\phi \in \tciFourier }Fix(\phi )\neq
\emptyset .$ Clearly, for each $\phi \in \tciFourier ,$ $Fix(\phi )$ is
non-empty (by Corollary 21), convex (as $\phi $ is affine), and closed hence
compact in $X.$ It suffices to show that the family $\{Fix(\phi ):$ $\phi
\in \tciFourier \}$ has the finite intersection property, i.e., $%
\bigcap_{i=1}^{n}Fix(\phi _{i})\neq \emptyset $ for any $\{\phi _{1},\ldots
,\phi _{n}\}$ $\subset \tciFourier $. The proof is by induction on $n.$ For $%
n=1,$ clearly $Fix(\phi _{1})\neq \emptyset $ (Corollary 21). Assume that
the statement is true for any family $\{\phi _{1},\ldots ,\phi _{k}\}$ $%
\subset \tciFourier $ with $k=n-1$ and let $\{\phi _{1},\ldots ,\phi _{n}\}$ 
$\subset \tciFourier $ be arbitrary. For any $x\in
\bigcap_{i=1}^{n-1}Fix(\phi _{i}),$ $\phi _{n}(x)=\phi _{n}(\phi
_{i}(x))=\phi _{i}(\phi _{n}(x))$ for all $i=1,\ldots ,n-1,$ i.e., $\phi
_{n}(x)\in \bigcap_{i=1}^{n-1}Fix(\phi _{i}).$ Thus $\phi _{n}$ maps the
non-empty compact convex set $\bigcap_{i=1}^{n-1}Fix(\phi _{i})$ into
itself. By Corollary 21 again, it has a fixed point $\bar{x}=\phi _{n}(\bar{x%
})\cap \bigcap_{i=1}^{n-1}Fix(\phi _{i}),$ i.e., $\bar{x}\in
\bigcap_{i=1}^{n}Fix(\phi _{i}).$
\end{proof}

\section{Concluding Remarks}

It is well established that the Markov-Kakutani fixed point theorem implies
the Hahn-Banach theorem (Kakutani [16]). The two results are indeed
equivalent (for a short and elegant proof of the converse, see D. Werner
[21]). Since we derived here the convex KKM\ theorem from the theorem on the
separation of convex sets, we have thus established the equivalence of the
Hahn-Banach theorem, Klee's intersection theorem, the convex KKM theorem,
the fixed point theorem for von Neumann relations, the Sion-von Neumann
minimax theorem, and the Markov-Kakutani fixed point theorem.

Although the convex KKM theorem is a particular instance of the KKM\
principle of Ky Fan, and since the fixed point and coincidence properties in
Theorems 9 and 12 are special cases of similar results for so-called $F$ and 
$F^{\ast }$ maps (see [2, 3, 4]), the interest here resides in the use of
simple arguments of convexity rather than the Brouwer fixed point theorem or
Sperner's lemma. It would be most interesting to know if the Ky Fan KKM\
principle (or any of its equivalent results) can be derived directly from
the convex KKM theorem. In other words, can any of the question marks below
be settled? (The smaller arrows are established; FPT stands for Fixed Point
Theorem.)

\begin{equation*}
\begin{bmatrix}
\begin{array}{c}
\text{Convex Klee} \\ 
\text{(Proposition 3)}%
\end{array}
& \rightleftarrows & 
\begin{array}{c}
\text{Convex KKM} \\ 
\text{(Theorem 6)}%
\end{array}
& \rightleftarrows & 
\begin{array}{c}
\text{FPT for }\mathcal{N} \\ 
\text{(Theorem 9)}%
\end{array}
\\ 
\uparrow {\Huge \downarrow ?} & \nwarrow {\Huge \searrow ?} & \uparrow 
{\Huge \downarrow ?} & {\Huge ?\swarrow }\nearrow & \uparrow {\Huge %
\downarrow ?} \\ 
\text{KKM\ Lemma} & \rightleftarrows & \text{Ky Fan KKM} & \rightleftarrows
& \text{Browder-Ky Fan FPT}%
\end{bmatrix}%
\end{equation*}

\bigskip

Let us point out, for the reader's benefit that the following equivalences
have been established in [15]:%
\begin{equation*}
\begin{bmatrix}
\text{Brouwer FPT} &  & \rightleftarrows &  & \text{Topological Klee} \\ 
& \nwarrow \searrow &  & \swarrow \nearrow &  \\ 
&  & \text{Ky Fan KKM} &  & 
\end{bmatrix}%
\end{equation*}

Here, the "topological Klee theorem" [15] reads:\bigskip

\textit{A family of }$n$\textit{\ closed convex sets in a topological vector
space has a non-empty intersection if and only if the union of the }$n$%
\textit{\ sets is }$(n-2)$\textit{-connected and the intersection of every }$%
n-1$\textit{\ of them is non-empty.}\bigskip

This topological version of Klee's theorem yields the equivalent formulation
of the Brouwer fixed point theorem:

\begin{center}
\textit{The }$n$\textit{-sphere }$S^{n}$\textit{\ is not }$n$\textit{%
-connected.}
\end{center}

Indeed, the $n-$dimensional faces of the $(n+1)-$simplex $\Delta ^{n+1}$
form a family of $n+2$ closed convex sets in $%
\mathbb{R}
^{n+2}$. Moreover, every intersection of $n+1$ of them is non-empty, but the
whole intersection is empty. Hence, their union - which consists of the
boundary $\partial \Delta ^{n+1}$ - is not $n-$connected. Since $\partial
\Delta ^{n+1}$ is homeomorphic to $S^{n}$, it follows that $S^{n}$ is not $%
n- $connected. This establishes the implication:\ topological Klee theorem $%
\Longrightarrow $ Brouwer FPT. The equivalence Brouwer FPT $%
\Longleftrightarrow $ Ky Fan KKM\ principle is well-established (see e.g.,
[9]).

Also, since every convex set in a topological vector space is contractible,
hence $n-$connected for every $n\geq 0$, the topological Klee theorem
implies Proposition 3. Does the converse hold true?\bigskip

\end{document}